\newtheorem{theorem}{Theorem}[section]
\newtheorem{lemma}[theorem]{Lemma}
\theoremstyle{definition}
\newtheorem{definition}[theorem]{Definition}
\newcommand{\ff}{\mathbb{F}_{2^n}}
\newcommand{\acc}{\mbox{AC}^0(\oplus)}
\newcommand{\bin}{\mathbb{Z}_{2^n-1}}
\newcommand{\summ}{\displaystyle\sum}
\newcommand{\F}{\mathbb{F}}
\newcommand{\E}{\mathbb{E}}
\newcommand{\pr}{\mathbb{P}}
\begin{document}
\title{Integer Addition and Hamming Weight}
\author{John Y. Kim}
\maketitle

\begin{abstract}
We study the effect of addition on the Hamming weight of a positive integer.  Consider the 
first $2^n$ positive integers, and fix an $\alpha$ among them.  We show that if the binary 
representation of $\alpha$ consists of $\Theta(n)$ blocks of zeros and ones, 
then addition by $\alpha$ causes a constant fraction of low 
Hamming weight integers to become high Hamming weight integers.  This result has applications 
in complexity theory to the hardness of computing powering maps using bounded-depth arithmetic circuits over 
$\F_2$.  Our result implies that powering by $\alpha$ composed of many blocks require 
exponential-size, bounded-depth arithmetic circuits over $\F_2$.  
\end{abstract}

\section{Introduction}
We begin with a natural, but largely unstudied question: How does the Hamming weight of an 
integer (written in base $2$) change under addition?  To make this precise, we take 
$\alpha\leq 2^n$ to be a fixed integer and let $S$ be chosen uniformly at random from 
$\{1,2,\cdots,2^n\}$.  Write $S$ in binary, and take $X$ to be its Hamming weight.  
Let $Y$ be the Hamming weight of the translation $S+\alpha$.  
Then what can we say about the joint distribution of initial and final weights, $(X,Y)$?

Our question is motivated by the problem of determining the complexity of powering maps 
in $\mathbb{F}_{2^n}$.  This problem has been studied extensively in complexity 
theory~\cite{Swastik,Eberly,FichTompa,Gathen,HealyViola,Hesse,Shparlinski}.  
Recently, Kopparty~\cite{Swastik} showed that the powering map 
$x\rightarrow x^{\frac{1}{3}}$ from $\ff\rightarrow\ff$ cannot be computed with a 
polynomial-size, bounded-depth arithmetic circuit over $\F_2$ (a.k.a $\acc$ circuit).  Recall that 
arithmetic circuits are only allowed addition and multiplication gates of unbounded fan-in).   
A major advantage of working in $\acc$ is that it is basis invariant.  
That is, determining the $\acc$ complexity of powering does not 
depend on the choice of basis for $F_{2^n}$.  
At the core of Kopparty's argument was the following shifting property of $\frac{1}{3}$: 
a constant fraction of elements in $\bin$ change from low to high Hamming 
weight under translation by $\frac{1}{3}$.

\begin{definition}
Let $M = \{x \in \bin \mid wt(x) \leq \frac{n}{2}\},$ where 
$wt(x)$ is the Hamming weight of $x$.  
We say that $\alpha \in \bin$ has the $\epsilon$-\emph{shifting property} if 
$M \cup (\alpha+M) \geq \left(\frac{1}{2}+\epsilon\right) 2^n$.
\end{definition}

We say that any binary string in $M$ is light, and any binary string not in $M$ is heavy.  
Then $\alpha$ has the $\epsilon$-shifting property if translating $\bin$ by $\alpha$ takes a constant 
fraction of light strings to heavy strings.  
Kopparty proved that powering by any $\alpha$ with the $\epsilon$-shifting property 
requires exponential circuit size in $\acc$~\cite{Swastik}.  Our main result 
is that any $\alpha$ with many blocks of $0$'s and $1$'s in its binary representation 
has the $\epsilon$-shifting property, proving a conjecture of Kopparty.

\begin{theorem}
\label{versatilePowers}
$\forall c>0$, $\exists \epsilon>0$, such that the following holds: 
Let $\sigma \in \{0,1\}^n$ be a bit-string of the form $\sigma = \sigma_1 \sigma_2 \cdots \sigma_m$, where $m \geq cn$, $\sigma_i$ is 
either $0^{L_i}$ or $1^{L_i}$, and each $L_i$ is chosen to be maximal.  Let $\alpha \in \mathbb{Z}_{2^n-1}$ have base $2$ representation 
given by $\sigma$.  Then $\alpha$ has the $\epsilon$-shifting property.
\end{theorem}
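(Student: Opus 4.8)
The plan is to reduce the shifting property to an anti-concentration estimate and then establish it via a bivariate local limit theorem. Work in $\bin$ with its (end-around carry) addition; let $S$ be uniform, $X = wt(S)$, $Y = wt(S+\alpha)$, $D = Y - X$. Since $|M| \ge 2^{n-1}$, $|M \cup (\alpha+M)| = 2|M| - \#\{S : S,\, S+\alpha \in M\}$, and $\pr[X \le n/2] = \tfrac12 + o(1)$, it suffices to produce an absolute constant $\epsilon' = \epsilon'(c) > 0$ with $\pr[X \le n/2 < Y] \ge \epsilon'$ for all large $n$ (small $n$ is handled by shrinking $\epsilon$). Throughout, the discrepancies among $\bin$, $\mathbb{Z}_{2^n}$, $\{1,\dots,2^n\}$ and the parity of $\lfloor n/2\rfloor$ perturb all counts by $o(2^n)$ and are absorbed into $\epsilon$.

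I would begin with a carry decomposition. From $wt(a) + wt(b) - wt(a+b) = \#\{\text{carries}\}$ — cyclically, $D = wt(\alpha) - \sum_j c_j$ with $c_{j+1} = \mathrm{MAJ}(s_j, \alpha_j, c_j)$ — group the $n$ bit positions into the $m$ maximal blocks of $\alpha$ and write $\sum_j c_j = \sum_{i=1}^m g_i(b_i, \xi_i)$, where $b_i$ is the carry entering block $i$ and $\xi_i$ its $L_i$ fresh uniform bits of $S$. A four-case check (block of $0$'s or of $1$'s; entered carrying or not) shows that $g_i$ is a leading run-length of $\xi_i$ truncated at $L_i$; that the block symbols alternate; that $(b_i)$ is a Markov chain on $\{0,1\}$ contracted by a factor $\ge \tfrac12$ at every block; and that for a $1$-block $i$ one has $b_i = 1$ only if the preceding $0$-block is entered carrying and $\xi_{i-1} = 1^{L_{i-1}}$, so $\pr[b_i = 0] \ge \tfrac12$.

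The crux is the variance lower bound $\mathrm{Var}(D) \ge \delta n$. I would use the Poisson-equation/martingale-difference decomposition $D - \E D = \sum_{i=1}^m M_i$ along the filtration generated by $\xi_1, \dots, \xi_m$, so that $\mathrm{Var}(D) = \sum_i \E[M_i^2]$ exactly, where $M_i$ is the centering over fresh $\xi_i$ (with $b_i$ frozen) of $g_i(b_i, \xi_i) + \psi_i\!\big(F_i(b_i, \xi_i)\big)$; here $F_i$ is the block transition, $\psi_i(b) = \E[\,\text{future carries} \mid b_{i+1} = b\,]$, and $\Delta_i := \psi_i(1) - \psi_i(0)$. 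A monotone coupling of the carry chain gives $\psi_i(1) \ge \psi_i(0)$ for all $i$, and then for a $1$-block $i$ the transition into the next (necessarily $0$-)block forces $\Delta_i \ge \E[\,\text{carries of a }0\text{-block entered carrying}\,] \ge 1$. Hence for each of the $\ge \lfloor m/2\rfloor - 1 \ge \tfrac{c}{2}n - 1$ $1$-blocks, $\E[M_i^2] \ge \pr[b_i = 0]\cdot \mathrm{Var}_{\xi_i}(\,\cdot \mid b_i = 0) \ge \tfrac12 \cdot \tfrac14 \Delta_i^2 \ge \tfrac18$ (using $g_i \equiv 0$ in the $L_i = 1$ subcase, and noting larger $L_i$ only add variance), so $\mathrm{Var}(D) = \Omega(n)$. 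Since $S \mapsto S + \alpha$ is a bijection of $\bin$, $Y \stackrel{d}{=} X$, whence $\E D = 0$ and $\mathrm{Var}(Y) = \mathrm{Var}(X) = \tfrac n4(1 + o(1))$; therefore $\mathrm{Cov}(X, D) = -\tfrac12 \mathrm{Var}(D)$ and $r := \mathrm{Var}(D)/n$ lies in $[\delta, 1]$.

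Finally I would run a bivariate local central limit theorem for $(X, D)$: the characteristic function $\E[e^{i\theta X + i\phi D}]$ factors over the $m$ blocks up to the $2^{-L_i}$-sized mixing errors, each factor being bounded away from $1$ off the lattice by the nondegeneracy just established, so $\big((X - \tfrac n2)/\sqrt n,\, D/\sqrt n\big) \Rightarrow (Z_1, Z_2)$, a centered Gaussian with $\mathrm{Var}(Z_1) = \tfrac14$, $\mathrm{Cov}(Z_1, Z_2) = -\tfrac r2$, $\mathrm{Var}(Z_2) = r$. Thus $\pr[X \le n/2 < Y] = \pr[X \le n/2 < X + D] \to \pr[Z_1 \le 0 < Z_1 + Z_2]$, and an elementary computation shows this limit is $\ge \epsilon'(\delta) > 0$ uniformly for $r \in [\delta, 1]$ — the wedge $\{z_1 \le 0 < z_1 + z_2\}$ has positive Gaussian mass, and at the degenerate endpoint $r = 1$ the probability equals $\tfrac12$ — which gives the theorem. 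I expect the main obstacle to be the variance bound of the third paragraph — pinning down that the future-carry differential $\Delta_i$ stays bounded away from the value that would kill a $1$-block's martingale increment, and keeping the constants clean — the difficulty being that the carries are themselves the source of the variance (for $\alpha$ like $101010\cdots$ they carry all of it), so one cannot simply condition the carry chain away.
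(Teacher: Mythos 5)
Your proposal takes a genuinely different route from the paper, though both ultimately rest on a CLT-type anti-concentration argument. You reduce everything to the scalar $D=Y-X=\mathrm{wt}(\alpha)-\sum_j c_j$ via the carry-count identity, treat the carry chain as a Markov process, and prove the single inequality $\mathrm{Var}(D)=\Omega(n)$ through a martingale-difference (Poisson-equation) decomposition; together with the symmetry $Y\stackrel{d}{=}X$ this pins down the limiting covariance of $(X,D)$ up to one free parameter $r\in[\delta,1]$, and a bivariate CLT plus a compactness argument over $r$ finishes. The paper instead \emph{conditions} on the carry bits so that the block pairs $(X_i,Y_i)$ become literally independent, classifies blocks into four distribution types, shows (Lemma~\ref{manyType4}) that with probability $\ge\frac16$ a $\Theta(n)$ fraction are the ``good'' invertible-covariance type, uses pigeonhole (Lemma~\ref{identicalDists}) to extract $\Theta(n)$ \emph{identical} ones, applies the $2$D CLT to those, and then controls the leftover via Chebyshev (Lemma~\ref{remainder}) and a translation-along-$(1,-1)$ argument, with a separate pairing trick (Lemma~\ref{manyBlocks}) when almost all blocks have length $1$. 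Your route is conceptually cleaner: it sidesteps the $\frac16$-event, the four-type case analysis, the identical-distribution pigeonhole, and the ``all length-$1$ blocks'' special case, which the paper itself flags as subtle (and which your closing remark about $101010\cdots$ correctly diagnoses as the place where conditioning away the carries kills the variance). What the paper's conditioning buys is a clean product structure for the CLT, whereas the step you wave at — that the joint characteristic function of $(X,D)$ ``factors over the $m$ blocks up to $2^{-L_i}$ mixing errors'' and that the resulting CLT holds uniformly as $r$ approaches the degenerate value $1$ — is exactly where a full write-up of your version would have to do real work: a bivariate CLT for a function of a non-stationary, geometrically mixing Markov chain with a covariance that may degenerate. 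That is a genuine gap in rigor, though not an implausible one; the paper avoids it at the cost of substantially more case analysis. The variance lower bound via $\Delta_i=\psi_i(1)-\psi_i(0)\ge 1$ at every $1$-block (using the monotone coupling and $\Pr[b_i=0]\ge\frac12$) is a nice idea not present in the paper and appears correct modulo small bookkeeping about whether $c_j$ is counted as the carry into or out of position $j$.
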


Note that the theorem still applies even in the setting of integer addition, not just 
when doing addition mod $2^n-1$.  
Our result states that $\alpha$ with $\Theta(n)$ blocks have the $\epsilon$-shifting 
property.  It is not difficult to show that $\alpha$ with $o(\sqrt{n})$ blocks do not have the 
$\epsilon$-shifting property.  First, observe that $o(\sqrt{n})$-sparse (i.e. $\alpha$ with 
Hamming weight $\leq o(\sqrt{n})$) 
$\alpha$ do not have the $\epsilon$-shifting property because addition by $\alpha$ can only 
increase the weight by $o(\sqrt{n})$.  Since there are $O(\frac{2^n}{\sqrt{n}})$ 
light binary strings of a fixed weight, we get $o(2^n)$ light strings changing 
to heavy strings under translation by $\alpha$.

Next, observe that any $\alpha$ with $o(\sqrt{n})$ blocks can be written as a 
difference of two $o(\sqrt{n})$-sparse strings: $\alpha = \beta - \gamma$.  
Since translating by $\alpha$ is equivalent to first translating by $\beta$ and 
then by $-\gamma$, we find that $\alpha$ with $o(\sqrt{n})$ blocks does not have 
the $\epsilon$-shifting property.  Thus, at least qualitatively, we see a strong 
connection between the $\epsilon$-shifting property and the number of blocks.  
Establishing a full characterization of the $\epsilon$-shifing property remains 
an interesting open question.

\subsection{Related Work}

Kopparty gave a different condition for when $\alpha$ has the $\epsilon$-shifting property: 
its binary representation consists mostly of a repeating constant-length string that is 
not all zeros or ones~\cite{Swastik}.  Note that any integer 
expressible as $\frac{a\cdot 2^n+b}{q}$, where $a,b,q\in\mathbb{Z}$, $q>1$ is odd, 
and $0<|a|,|b|<q$, has binary representation of this form.  As a consequence, taking 
$q$-th roots and computing $q$-th residue symbols cannot be done with 
polynomial-size $\acc$ circuits.  Our main result generalizes Kopparty's condition, 
as the periodic strings form a small subset of the strings with $\Theta(n)$ blocks.

Beck and Li showed that the $q$-th residue map is hard to compute in $\acc$ 
by using the concept of algebraic immunity~\cite{BeckLi}.  It is worth noting that their 
method does not say anything about the complexity of the $q$-th root map in $\acc$.  
So in this regard, there is something to be gained by analyzing the $\epsilon$-shifting property 
condition.  A more detailed history of the complexity of arithmetic operations using 
low-depth circuits can be found in \cite{Swastik}.

\section{Application}

It is known that powering by sparse $\alpha$ has polynomial-size circuits in $\acc$.  
Kopparty's work shows that powering by $\alpha$ with the $\epsilon$-shifting property require  
exponential-size circuits in $\acc$.  We will use this result, along with our new 
generalized criterion for when $\alpha$ has the $\epsilon$-shifting property to expand the 
class of $\alpha$ whose powers are difficult to compute in $\acc$.

The proof resembles the method of Razborov and Smolensky for showing that Majority is not in 
$\mbox{AC}^0(\oplus)$~\cite{Razborov,Smolensky,Smolensky2}.  
We can show for $\alpha$ with the $\epsilon$-shifting property that if powering by $\alpha$ is 
computable by an $\mbox{AC}^0(\oplus)$ circuit, 
then every function $f:F_{2^n}\rightarrow F_{2^n}$ is well-approximated by the sum of a low-degree polynomial with 
a function that sits in a low dimensional space.  The fact that there are not enough such functions 
provides the desired contradiction.  In this way, we show certain powers require exponential-size 
circuits in $\mbox{AC}^0(\oplus)$.

As a consequence of Theorem~\ref{versatilePowers} and the above Razborov-Smolensky method, 
we get that the powering by any $\alpha$ with $\Theta(n)$ maximal uniform blocks requires an
exponential-size $\acc$ circuit, thus greatly expanding the class of powers that are hard 
to compute in $\acc$.

\begin{theorem}
\label{largeComplexity}
Let $\alpha \in \mathbb{Z}_{2^n-1}$ have base $2$ representation in the form given by Theorem~\ref{versatilePowers}.

Define $\Lambda:\ff\rightarrow\ff$ by $\Lambda(x) = x^{\alpha}$.  

Then for every $\acc$ circuit 
$C:\ff\rightarrow\ff$ of depth $d$ and size $M\leq 2^{n^{\frac{1}{5}d}}$, for sufficiently 
large $n$ we have:

$$\text{Pr}[C(x) = \Lambda(x)] \leq 1-\epsilon_0,$$

where $\epsilon_0 > 0$ depends only on $c$ and $d$.
\end{theorem}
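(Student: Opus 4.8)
The plan is to reprove the bound from scratch by the Razborov--Smolensky polynomial-approximation method, feeding in Theorem~\ref{versatilePowers} as the one new combinatorial ingredient. (If one only wants the qualitative conclusion, it is immediate: Theorem~\ref{versatilePowers} says $\alpha$ has the $\epsilon$-shifting property, and Kopparty's lower bound for $\epsilon$-shifting $\alpha$~\cite{Swastik} then applies verbatim; the argument below is what makes the size/accuracy tradeoff explicit.) So suppose, toward a contradiction, that some depth-$d$, size-$M$ $\acc$ circuit $C:\ff\to\ff$ with $M$ bounded as in the hypothesis satisfies $\pr_x[C(x)=\Lambda(x)]>1-\epsilon_0$, where the constant $\epsilon_0>0$ --- ultimately a function of $c$ and $d$ only --- is fixed at the very end. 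Fix an $\F_2$-basis of $\ff$, so that $C$ and $\Lambda$ are maps $\F_2^n\to\F_2^n$ and each of the $n$ output coordinates of $C$ is an $\acc$ circuit of depth $d$ and size at most $M$.

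First I would run the approximation method. Each output coordinate of $C$ agrees with some $\F_2$-polynomial of degree $(\log M)^{O(d)}$ on all but a $\gamma/n$-fraction of inputs; a union bound over the $n$ coordinates produces a polynomial map $P:\F_2^n\to\F_2^n$ of degree $D:=(\log M)^{O(d)}$ with $\pr_x[P(x)\ne C(x)]\le\gamma$. The size bound in the hypothesis is calibrated precisely so that, for $\gamma$ a small enough constant, $D=o(\sqrt n)$; combining with the contradiction hypothesis, $\pr_x[P(x)\ne x^{\alpha}]\le\epsilon_0+\gamma=:\epsilon_1$, i.e., the field element $P(x)$ equals $x^{\alpha}$ for all but an $\epsilon_1$-fraction of $x\in\ff$. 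Next I would bring in Theorem~\ref{versatilePowers}: since $\alpha$ is of the required form, it has the $\epsilon$-shifting property, so $|M\cup(\alpha+M)|\ge(\tfrac12+\epsilon)2^n$, equivalently the set $B\subseteq\bin$ of exponents lying outside $M\cup(\alpha+M)$ satisfies $|B|\le(\tfrac12-\epsilon)2^n$.

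Now comes the heart of the argument: $P$ lets one express \emph{every} $f:\ff\to\ff$ cheaply. It is convenient to work first with functions $\ff^\times\to\ff$, a $(2^n-1)$-dimensional $\ff$-space with monomial basis $\{x^a\}_{a\in\bin}$ on which multiplication by $x^{\alpha}$ acts by the permutation $a\mapsto a+\alpha$ of $\bin$ (passing to all of $\ff\to\ff$ only multiplies every count below by $2^n$). Given such an $f=\sum_{a\in\bin}c_a x^a$, split $f=f_1+f_B$ according as $a\in M\cup(\alpha+M)$ or $a\in B$. Split $f_1$ in turn into its part on $M$, which is literally a polynomial of degree $\le n/2$, and its part on $(\alpha+M)\setminus M$, which equals $g\cdot x^{\alpha}$ for a unique $g$ supported on $M$; replacing $x^{\alpha}$ by $P$ turns the latter into the polynomial $g\cdot P$ of degree $\le n/2+D$, which agrees with it wherever $P(x)=x^{\alpha}$. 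Hence every $f$ agrees, off at most $\epsilon_1 2^n$ inputs, with a sum of a polynomial of degree $\le n/2+D$ and an element of the $\ff$-span of $\{x^a:a\in B\}$, a space of dimension $|B|\le(\tfrac12-\epsilon)2^n$. Finally I would count: the number of such sums is at most $(2^n)^{T+|B|}$ with $T=\sum_{i\le n/2+D}\binom{n}{i}=(\tfrac12+o(1))2^n$ (this is where $D=o(\sqrt n)$ enters), hence at most $2^{(1-\epsilon+o(1))n2^n}$; inflating by the at most $2^{2\epsilon_1 n2^n}$ functions within Hamming distance $\epsilon_1 2^n$ of a given one still covers only $2^{(1-\epsilon+2\epsilon_1+o(1))n2^n}$ of the $2^{n2^n}$ functions $\ff\to\ff$. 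Choosing $\epsilon_0,\gamma$ small enough that $\epsilon_1<\epsilon/3$ makes this strictly less than $2^{n2^n}$ for all sufficiently large $n$ --- the desired contradiction --- and since $\epsilon$ is determined by $c$ alone, the resulting $\epsilon_0$ depends only on $c$ and $d$.

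The main obstacle is the polynomial-approximation step itself: extracting from a depth-$d$, size-$M$ $\acc$ circuit a polynomial of genuinely low degree $(\log M)^{O(d)}$ with separately tunable error $\gamma$, uniformly over all $n$ output coordinates and independently of the basis chosen for $\ff$. This is exactly the Razborov--Smolensky-style approximation (over the uniform distribution, which is all we need) underpinning Kopparty's lower bound, and it is where the real content lies. Everything after it is bookkeeping: the point $x=0$, the lone monomial $x^{2^n-1}$, and the few exponents at which the reduction $a\mapsto a+\alpha\bmod(2^n-1)$ is exceptional, shift the relevant dimensions and volumes by only $2^{o(n)}$ and vanish into the $o(1)$ terms; and the inequalities relating $\epsilon,\epsilon_0,\gamma,\epsilon_1$ demand only that $\epsilon_1$ be pushed below a fixed fraction of $\epsilon$.
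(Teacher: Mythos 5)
Your proposal is correct and takes essentially the same route as the paper: the paper states Theorem~\ref{largeComplexity} as an immediate consequence of Theorem~\ref{versatilePowers} together with Kopparty's lower bound for $\epsilon$-shifting $\alpha$~\cite{Swastik}, sketching only in prose the Razborov--Smolensky decomposition into ``low-degree polynomial plus function in a low-dimensional space,'' and your argument implements exactly that sketch, with the extra detail (polynomial approximation of the $\acc$ circuit, $f = f_1 + f_B$ split using $M \cup (\alpha+M)$, and the counting against $2^{n2^n}$) being a reconstruction of Kopparty's proof.
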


\section{The Proof of the Main Result}

\subsection{Outline of Proof}
Suppose we have a bit-string of length $n$.  The bit-string is called $\emph{light}$ if its Hamming weight is at most $\frac{n}{2}$.  
The bit-string is called $\emph{heavy}$ otherwise.  
It is enough to show that translation by $\alpha$ in $\mathbb{Z}_{2^n-1}$ transforms some 
positive constant fraction of the light bit-strings into heavy bit-strings.

We choose a binary string of length $n$ uniformly at random, translate it by $\alpha$, and look at the joint distribution of its 
initial weight $X$ and final weight $Y$.  Let $(\overline{X},\overline{Y}) = (X-\E[X],Y-\E[Y])$, so that when plotted, the plane is 
split into four quadrants.  The fraction of strings that shift weight from light 
to heavy is the proportion of the distribution in the second quadrant.  By symmetry, the same proportion of the distribution 
should lie in the fourth quadrant.  We will prove that some constant fraction of the distribution lies in the second or fourth quadrant.  

To get a handle on the distribution, we break up $\alpha$ 
into its $m$ uniform blocks of $0$'s and $1$'s, and consider addition on each block separately.  The distribution of the initial 
weight and final weight of any block is determined by the carry bit from the addition on the previous block and the carry bit 
going into the next block.  Thus, if the carry bits are given, then the weight distributions on the blocks are now independent.  
Although we will not be able to specify the distribution of the carry bits, we will show that with probability $\frac{1}{6}$, 
the carry bits have a certain property, and whenever they have this property, then the conditional distribution of $(X,Y)$ 
has a positive constant fraction of its mass in the second or fourth quadrants.

\subsection{Notation and Overview}

First, observe that it suffices to prove the main result for $M$ as viewed as a subset of $\mathbb{Z}_{2^n}$ instead of $\bin$.  
Note that only one element, $1^n \in \mathbb{Z}_{2^n}$, is not an element of $\mathbb{Z}_{2^n-1}$.  Also, when translating 
by $\alpha$, the resulting bit-string in $\mathbb{Z}_{2^n-1}$ is either the same or one more than the resulting bit-string 
in $\mathbb{Z}_{2^n}$.  Since only $o(n)$ of the heavy bit-strings of $\mathbb{Z}_{2^n}$ tranform into light bit-strings under 
translation by 1, if $\Theta(n)$ light bit-strings become heavy under translation by $\alpha$ in $\mathbb{Z}_{2^n}$, then at least 
$\Theta(n) - o(n) = \Theta(n)$ light bit-strings become heavy under translation by $\alpha$ in $\mathbb{Z}_{2^n-1}$.  This shows that 
we can work in the symmetric environment of all bit-strings of length $n$, $\mathbb{Z}_{2^n}$, and still achieve the result we want.

Let $S \in \mathbb{Z}_{2^n}$ be chosen uniformly at random.  Let $T = \alpha + S$.  Let $X = wt(S)$ and $Y = wt(T)$.  

Write $\alpha = \alpha_1 \alpha_2 \cdots \alpha_m$, $S = S_1 S_2 \cdots S_m$, and $T = T_1 T_2 \cdots T_m$, where each of the $i$-th parts 
have length $L_i$.  Let $X_i = wt(S_i)$ and $Y_i = wt(T_i)$.  Then $(X,Y) = \left(\displaystyle\sum_{i=1}^{m}{X_i},\displaystyle\sum_{i=1}^{m}{Y_i}\right)$.
Let $(\overline{X},\overline{Y}) = (X-\mathbb{E}[X],Y-\mathbb{E}[Y])$.  Then the part of the distribution of $(\overline{X},\overline{Y})$ in 
the second quadrant corresponds to light bit-strings translating to heavy bit-strings.  Similarly, the fourth quadrant corresponds to 
heavy to light bit-string translation.   To avoid having to pass to analogously defined $(\overline{X_i},\overline{Y_i})$ all the time, 
any reference to the second or fourth quadrant will be understood to be relative to $\left(\frac{L_i}{2},\frac{L_i}{2}\right)$ the mean of $(X_i,Y_i)$.  
We want to show that a positive constant fraction of the distribution lies in the second or fourth quadrants.

The random variables in the sum $\left(\displaystyle\sum_{i=1}^{m}{X_i},\displaystyle\sum_{i=1}^{m}{Y_i}\right)$ are highly dependent.  
To get around this, we will condition on the fixing of the carry bits.  Once the carry bits are fixed, the terms in the sum are independent.  
We will show that with probability at least $\frac{1}{6}$, we can find $\Theta(n)$ terms with identical distribution.  Since the terms are 
independent, we will use the multidimensional Central Limit Theorem to prove these identical distributions sum to a Gaussian distribution 
with dimensions of size $\Theta(\sqrt{n})$.

The remaining $O(n)$ terms can be divided into two categories.  Either the term has non-zero covariance matrix or it is 
a translation along the line $y=-x$ relative to the mean, $\left(\frac{L_i}{2},\frac{L_i}{2}\right)$.  
By applying the $2$-dimensional Chebyshev Inequality to the 
terms with non-zero covariance matrix, we show that at least half of the distribution lies in a square with dimensions $O(\sqrt{n})$.  
Any Gaussian with dimensions $\Theta(\sqrt{n})$ centered in the square of dimensions 
$O(\sqrt{n})$ will have a fixed positive proportion $p$ of its distribution in the second quadrant and $p$ of its distribution in the fourth quadrant.  
Finally, a translation of any magnitude along the line $y=-x$ still gives at least $p$ of the distribution in the second or fourth quadrant 
(although we don't know which one!).  However, 
as the addition map is a bijection from $\mathbb{Z}_{2^n}$ to itself, we get that the number of strings that go from light to heavy equals 
the nubmer of strings that go from heavy to light.  So we conclude that at least $p$ of the distribution lies in the second quadrant and 
at least $p$ of the distribution lies in the fourth quadrant.

\subsection{Computing the Distribution}

We first compute the $2$-dimensional distribution of the initial and final weights of the $i$-th block conditioned on the carry bit from the 
$(i+1)$-th block.  If the carry bit from the $i$-th block is denoted by $c_i$, then we want to understand the distribution of $(X_i,Y_i)$ 
given the carry bit $c_{i+1}$.  Suppose that $\alpha_i = 1^{L_i}$.  The case where $\alpha_i = 0^{L_i}$ is similar.  

\begin{lemma}
\label{dist}
Suppose that $\alpha_i = 1^{L_i}$.  The joint distribution of $(X_i,Y_i)$ conditioned on the carry bit $c_{i+1}$ is given by:

$$p_i(x,y\mid c_{i+1}=1) = 
\begin{cases} 
\frac{1}{2^{L_i}} \binom{L_i}{x} & \mbox{if }x=y (\mbox{then }c_i = 1) \\ 
0 &\mbox{else} 
\end{cases}$$

$$p_i(x,y\mid c_{i+1}=0) = 
\begin{cases}
\frac{1}{2^{L_i}} & \mbox{if }(x,y)=(0,L_i) (\mbox{then }c_i = 0)\\
\frac{1}{2^{L_i}} \binom{L_i-y+x-2}{x-1} & \mbox{if } L_i-1\geq y\geq x-1 \geq 0 (\mbox{then }c_i = 1)\\
\end{cases}$$
\end{lemma}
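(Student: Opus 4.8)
The plan is to analyze the column-by-column addition of $\alpha_i = 1^{L_i}$ to the random block $S_i$, tracking the carry as it propagates from the least significant bit of the block toward the most significant. I will set up coordinates so that $S_i = s_1 s_2 \cdots s_{L_i}$ reading from most significant to least significant bit, so that $c_{i+1}$ is the carry fed into the least significant position $s_{L_i}$ and $c_i$ is the carry produced out of the most significant position $s_1$. Adding $1$ to a bit $s_j$ together with an incoming carry $b$: the output bit is $s_j \oplus 1 \oplus b$ and the outgoing carry is the majority of $\{s_j, 1, b\}$, which equals $s_j \vee b$. So once a carry is present it persists, and in fact with $\alpha_i$ all ones the carry out of position $j$ is $1$ as soon as \emph{either} the incoming carry is $1$ \emph{or} $s_j = 1$.

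First I would handle $c_{i+1} = 1$. Then the incoming carry is already $1$, so by the observation above every outgoing carry is $1$; in particular $c_i = 1$, matching the parenthetical. With a standing carry of $1$, each output bit is $s_j \oplus 1 \oplus 1 = s_j$, so $T_i = S_i$ bit for bit, hence $Y_i = X_i$ deterministically. Since $S_i$ is uniform on $\{0,1\}^{L_i}$, we get $p_i(x,y \mid c_{i+1}=1) = 2^{-L_i}\binom{L_i}{x}$ when $x = y$ and $0$ otherwise, which is the claimed first formula.

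Next, the case $c_{i+1} = 0$. Here the carry starts at $0$ and stays $0$ exactly through the longest suffix of $S_i$ consisting of zeros; it flips to $1$ at the first $1$ encountered scanning from the least significant end, and stays $1$ thereafter. Conditioning on this "pivot" position splits into two subcases. If $S_i = 0^{L_i}$, the carry never turns on, $c_i = 0$, and $T_i = 1^{L_i}$, so $(X_i, Y_i) = (0, L_i)$; this occurs with probability $2^{-L_i}$, the first line. Otherwise write $S_i = u\, 1\, 0^{k}$ where $0^{k}$ is the block of trailing zeros ($0 \le k \le L_i - 1$) and $u \in \{0,1\}^{L_i - k - 1}$ is arbitrary; then $c_i = 1$. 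On the trailing zeros the carry is $0$, so those output bits stay $0$; at the pivot $1$ the sum is $1 + 1 + 0$ giving output bit $0$ and carry $1$; on the prefix $u$ the standing carry $1$ complements each bit, so those output bits are $\bar u$. Thus $Y_i = (L_i - k - 1) - X_i'$ where $X_i' = wt(u)$ and $X_i = X_i' + 1$. I would then count: fixing $x = wt(S_i)$ and $y = wt(T_i)$, we have $x - 1 = wt(u) = X_i'$ and the length of $u$ is $y + x - 1$ (since $y = (\text{len } u) - (x-1)$), so $k = L_i - 1 - \mathrm{len}(u) = L_i - y - x$; the constraints $0 \le k$ and $0 \le X_i' \le \mathrm{len}(u)$ translate to $L_i - 1 \ge y \ge x - 1 \ge 0$, and the number of strings $u$ of length $y+x-1$ with weight $x-1$ is $\binom{y+x-1}{x-1} = \binom{(y+x-1)}{x-1}$. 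A small index bookkeeping check against the stated $\binom{L_i - y + x - 2}{x-1}$ is the one place I expect to have to be careful — this amounts to verifying that $\mathrm{len}(u) = L_i - k - 1$ has been expressed correctly in terms of $(x,y)$, i.e.\ reconciling $y + x - 1$ with $L_i - y + x - 2$, which forces the relation $L_i = 2y + 1$ and suggests either a differing bit-order convention or a typo in the binomial's top argument; I would adopt whichever orientation makes the two expressions agree and state it explicitly.

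The main obstacle, then, is not the probabilistic content — which is just "carry propagation in unary addition is monotone" plus a uniform count of the free bits — but pinning down the indexing convention (which end of the block is most significant, and how $c_i$, $c_{i+1}$ sit relative to it) so that the explicit binomial coefficients come out exactly as written, and then dispatching the symmetric $\alpha_i = 0^{L_i}$ case by the bit-complement symmetry $S_i \mapsto \overline{S_i}$ (which swaps adding $1^{L_i}$ with adding $0^{L_i}$ together with a global carry adjustment), noting it only changes the case by relabeling $x \leftrightarrow L_i - x$ and $y \leftrightarrow L_i - y$.
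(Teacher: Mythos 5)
Your high-level plan is exactly the paper's: dispose of $c_{i+1}=1$ by noting the standing carry leaves the block unchanged, then for $c_{i+1}=0$ condition on the number of trailing zeros of $S_i$ and count the free prefix bits. But your execution of the column addition in the $c_{i+1}=0$ case contains a genuine error that produces the wrong binomial coefficient. You correctly write the full-adder relation $t_j = s_j \oplus 1 \oplus b$ (with $\alpha$-bit $1$), but then misapply it: on the trailing zeros you have $s_j=0$, $b=0$, so $t_j = 0\oplus 1\oplus 0 = 1$, not $0$ as you claim; and on the prefix $u$ the standing carry gives $t_j = s_j\oplus 1\oplus 1 = s_j$, not $\bar s_j$. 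Equivalently and more cleanly, for $S_i\ge 1$ one has $S_i + (2^{L_i}-1) = 2^{L_i} + (S_i-1)$, so the block output is just $S_i - 1$ with carry out $1$, and $u\,1\,0^k - 1 = u\,0\,1^k$. Hence $Y_i = wt(u) + k$, not $\mathrm{len}(u) - wt(u)$ as you derived. Redoing your count with the correct relation $y = (x-1) + k = (x-1) + \bigl(L_i - 1 - \mathrm{len}(u)\bigr)$ yields $\mathrm{len}(u) = L_i - y + x - 2$, and the number of prefixes of that length with $x-1$ ones is $\binom{L_i - y + x - 2}{x-1}$, which is precisely the coefficient in the lemma; the constraints $k\ge 0$, $\mathrm{len}(u)\ge x-1$, $x\ge 1$ then translate exactly to $L_i - 1\ge y\ge x-1\ge 0$. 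The mismatch you noticed between $\binom{y+x-1}{x-1}$ and $\binom{L_i-y+x-2}{x-1}$ is therefore not a bit-order convention or a typo in the paper --- it is this complement error on your side; the lemma as stated is correct, and once the arithmetic is fixed your argument recovers it by essentially the same route the paper takes.
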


If $c_{i+1} = 1$, then $X_i$ = $Y_i$ and $c_i = 1$.  Hence, the probability mass function for $(X_i,Y_i)$ given $c_{i+1} = 1$ is given by

$$p_i(x,y\mid c_{i+1}=1) = 
\begin{cases} 
\frac{1}{2^{L_i}} \binom{L_i}{x} & \mbox{if }x=y \\ 
0 &\mbox{else} 
\end{cases}$$

If $c_{i+1} = 0$, then the distribution of $(X_i,Y_i)$ depends solely on the number of trailing zeros, $Z_i$, of $S_i$:

$$Y_i = \begin{cases} X_i &\mbox{if }X_{i+1}-Y_{i+1} = L_{i+1} \\ 
X_i+Z_i-1 &\mbox{if }Z_i<L_i \\
L_i &\mbox{if }Z_i=L_i\end{cases}$$

We therefore first compute the distribution of $Z_i$ conditioned on $X_i$ and use that to compute the joint distribution of $(X_i,Y_i)$.  
The distribution of $Z_i\mid X_i$ is given by

$$p_{Z_i}(z\mid x) = 
\begin{cases}
1 &\mbox{if } (x,z) = (0,L_i) \\
\frac{\binom{L_i-z-1}{x-1}}{\binom{L_i}{x}} &\mbox{if } L_i-z\geq x \\
0 &\mbox{else}
\end{cases}$$

Since $p_i(x,y\mid c_{i+1}) = p_{X_i}(x) p_{Y_i}(y\mid x)$, we compute $p_{X_i}(x)$ and $p_{Y_i}(y\mid x)$.  
As $X_i$ is binomial on $L_i$ trials with success probability $\frac{1}{2}$, 

$$p_{X_i}(x) = \frac{1}{2^{L_i}} \binom{L_i}{x} \mbox{for } x = 0,1,\cdots ,L_i.$$

We can also write the distribution of $Y_i\mid X_i$ in terms of the distribution of $Z_i\mid X_i$:

$$p_{Y_i}(y_i\mid x_i) = 
\begin{cases}
p_{Z_i}(y_i-x_i+1\mid x_i) &\mbox{if } 0\leq y_i-x_i+1 < L_i \\
p_{Z_i}(y_i-x_i\mid x_i) &\mbox{if } (x_i,y_i) = (0,L_i)
\end{cases}$$

Hence, we have the joint distribution of $(X_i,Y_i)$ is

$$p_i(x,y\mid c_{i+1}=0) = 
\begin{cases}
\frac{1}{2^{L_i}} & \mbox{if }(x,y)=(0,L_i) (\mbox{then }c_i = 0)\\
\frac{1}{2^{L_i}} \binom{L_i-y+x-2}{x-1} & \mbox{if } L_i-1\geq y\geq x-1 \geq 0 (\mbox{then }c_i = 1)\\
\end{cases}$$

Similarly, if $\alpha_i = 0^{L_i}$, then the distribution of $(X_i,Y_i)$ is as follows:

If $c_{i+1} = 0$, then $X_i=Y_i$, $c_{i} = 0$ and 

$$p_i(x,y\mid c_{i+1}=0) = 
\begin{cases} 
\frac{1}{2^{L_i}} \binom{L_i}{x} & \mbox{if }x=y (\mbox{then }c_i = 0) \\ 
0 &\mbox{else} 
\end{cases}$$

When the carry bit makes the addition trivial, we call the resulting distribution the trivial distribution.  
Otherwise, the carry bit $c_{i+1} = 1$.  In this case, the distribution of $(X_i,Y_i)$ turns 
out to be symmetric with the case where $\alpha_i = 1^{L_i}$ and $c_{i+1} = 0$:

$$p_i(x,y\mid c_{i+1}=1) = 
\begin{cases}
\frac{1}{2^{L_i}} & \mbox{if }(x,y)=(L_i,0) (\mbox{then }c_i = 1)\\
\frac{1}{2^{L_i}} \binom{L_i-x+y-2}{y-1} & \mbox{if } L_i-1\geq x\geq y-1 \geq 0 (\mbox{then }c_i = 0)\\
\end{cases}$$

When the carry bit makes the addition nontrivial, as in this case, we call the resulting distribution the 
nontrivial distribution.

Fixing the carry bits leads to four types of distributions for the blocks based on the carry bit coming in 
from the previous block addition and the resulting carry bit from the current block addition.

\begin{enumerate}
 \item The block distribution is trivial and produces a carry bit that makes the subsequent block 
 distribution non-trivial (Trivial to non-trivial).
 \item Non-trivial to trivial
 \item Non-trivial to non-trivial (block length $L=1$)
 \item Non-trivial to non-trivial (block length $L\geq2$)
\end{enumerate}

We make the distinction between block lengths $1$ and $2$ for non-trivial to non-trivial distributions as 
the latter is the only distribution with invertible covariance matrix.  Ideally, we will find many 
identical distributions of type $4$, which will sum to a Gaussian with large enough dimensions.  
This will not be possible when most of the blocks have length 1, which we deal with separately.

Knowing the weight distribution of a block given the previous carry, it is straightforward to write down 
the distributions given both the previous carry and the produced carry.  Again, we assume the block 
$\alpha_i = 1^{L_i}$.

As a trivial distribution always produces a non-trivial carry, we get the trivial to non-trivial distribution 
is the same is the trivial distribution:

$$p_i(x,y\mid c_{i+1}=1, c_i=1) = 
\begin{cases} 
\frac{1}{2^{L_i}} \binom{L_i}{x} & \mbox{if }x=y  \\ 
0 &\mbox{else} 
\end{cases}$$

A non-trivial distribution that produces a trivial carry must have $(X_i,Y_i) = (0,L_i)$.  Also, 
a non-trivial distribution of block length $1$ that produces a non-trivial carry must have $(X_i,Y_i) = (1,0)$.  

Finally, a non-trivial distribution of block length greater than $1$ that produces a non-trivial carry 
has distribution:

$$p_i(x,y\mid c_{i+1}=0, c_i=1) = 
\begin{cases}
\frac{1}{2^{L_i}-1} \binom{L_i-y+x-2}{x-1} & \mbox{if } L_i-1\geq y\geq x-1 \geq 0 \\
0 &\mbox{else}
\end{cases}$$

We summarize these distributions in the next lemma:

\begin{lemma}
\label{dist2}
Suppose that $\alpha_i = 1^{L_i}$.  The joint distribution of $(X_i,Y_i)$ conditioned on the carry bits 
$c_{i+1}$ and $c_i$ is given by:

$$p_i(x,y\mid c_{i+1}=1, c_i=1) = 
\begin{cases} 
\frac{1}{2^{L_i}} \binom{L_i}{x} & \mbox{if }x=y  \\ 
0 &\mbox{else} 
\end{cases}$$

$$p_i(x,y\mid c_{i+1}=0, c_i=0) = 
\begin{cases}
1 & \mbox{if } (x,y) = (0,L_i) \\
0 &\mbox{else}
\end{cases}$$

$$p_i(x,y\mid c_{i+1}=0, c_i=1) = 
\begin{cases}
\frac{1}{2^{L_i}-1} \binom{L_i-y+x-2}{x-1} & \mbox{if } L_i-1\geq y\geq x-1 \geq 0 \\
0 &\mbox{else}
\end{cases}$$
\end{lemma}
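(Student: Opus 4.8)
The plan is to derive all three distributions of Lemma~\ref{dist2} from Lemma~\ref{dist} by conditioning additionally on the outgoing carry $c_i$. The crucial point, already visible in Lemma~\ref{dist}, is that for each fixed incoming carry $c_{i+1}$ the value of $c_i$ is a deterministic function of where $(X_i,Y_i)$ lands in the support; consequently Bayes' rule degenerates to restricting the support to a single point (or to its complement) and dividing by the corresponding normalizing constant.

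First I would record the block arithmetic explicitly. Since $\alpha_i = 1^{L_i}$ equals $2^{L_i}-1$ as an integer, the $i$-th block of the addition computes $S_i + (2^{L_i}-1) + c_{i+1}$, whose low $L_i$ bits form $T_i$ and whose overflow bit is $c_i$. If $c_{i+1}=1$ the sum is $S_i + 2^{L_i}$, so $T_i = S_i$ and $c_i = 1$ with probability one; hence conditioning on $c_i=1$ is vacuous and $p_i(x,y \mid c_{i+1}=1, c_i=1)$ coincides with the unconditional $p_i(x,y \mid c_{i+1}=1)$, namely $2^{-L_i}\binom{L_i}{x}$ on the diagonal $x=y$ and $0$ otherwise. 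If $c_{i+1}=0$ and $S_i = 0^{L_i}$, the sum is $1^{L_i}$, giving $(X_i,Y_i)=(0,L_i)$ and $c_i = 0$; while if $c_{i+1}=0$ and $S_i \neq 0^{L_i}$, the sum is $(S_i-1) + 2^{L_i}$, so $T_i = S_i - 1$, $c_i = 1$, and $Y_i = wt(S_i-1) = X_i - 1 + Z_i$ with $Z_i$ the number of trailing zeros of $S_i$ — this is precisely the case analysis already carried out before Lemma~\ref{dist}.

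It then follows that, conditioned on $c_{i+1}=0$, we have $\pr[c_i = 0] = \pr[S_i = 0^{L_i}] = 2^{-L_i}$ and $\pr[c_i = 1] = 1 - 2^{-L_i} = (2^{L_i}-1)/2^{L_i}$. Dividing the unconditional block masses of Lemma~\ref{dist} by these two normalizers yields $p_i(x,y \mid c_{i+1}=0, c_i=0)$ equal to $1$ at the single point $(0,L_i)$ and $0$ elsewhere, and $p_i(x,y \mid c_{i+1}=0, c_i=1) = \frac{1}{2^{L_i}-1}\binom{L_i-y+x-2}{x-1}$ on the region $L_i - 1 \geq y \geq x - 1 \geq 0$ and $0$ elsewhere, exactly as stated.

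I do not expect a genuine obstacle: the lemma is a bookkeeping reorganization of the carry-propagation computation already done, together with an elementary renormalization. The only steps requiring care are (i) confirming that, with $c_{i+1}=0$, the outgoing carry $c_i$ vanishes exactly when $S_i = 0^{L_i}$ — equivalently, that adding $1^{L_i}$ to any nonzero block overflows it — and (ii) checking the boundary of the support region $\{L_i - 1 \geq y \geq x - 1 \geq 0\}$ against the constraints on $Z_i$ (namely $0 \leq Z_i \leq L_i - X_i$ when $X_i \geq 1$), in particular at the edge cases $x = 0$, $x = L_i$, and $y = L_i - 1$.
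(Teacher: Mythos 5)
Your proposal is correct and follows essentially the same route as the paper: the paper also derives Lemma~\ref{dist2} from Lemma~\ref{dist} by observing that with $c_{i+1}=1$ the outgoing carry is deterministically $1$ (so the conditioning is vacuous), that with $c_{i+1}=0$ the outgoing carry is $0$ precisely when $S_i=0^{L_i}$, and by renormalizing the restricted masses by $2^{-L_i}$ and $(2^{L_i}-1)/2^{L_i}$ respectively. The only cosmetic difference is that you spell out the Bayes/renormalization step and the block-arithmetic identities explicitly, whereas the paper states them tersely; one small inaccuracy is that $x=0$ is not actually an edge case of the third support region, since $x-1\geq 0$ already excludes it (it belongs to the $c_i=0$ branch).
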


Observe that the last non-trivial to non-trivial probability distribution works for all lengths $L_i\geq 1$.  However, when 
$L_i=1$, $(x,y) = (0,1)$ with probability $1$.  We will still consider this as a separate type of distribution as its 
covariance matrix is all zeros, and consequently not invertible, which will be important for analysis.

\subsection{Computing the Covariance Matrix}

\begin{lemma}
\label{covariance}
The covariance matrix $M$ of the trivial to non-trivial distribution of the random vector $(X_i,Y_i)$ is given by 
$$M = 
\begin{pmatrix}
\frac{L_i}{4} & \frac{L_i}{4} \\
\frac{L_i}{4} & \frac{L_i}{4}
\end{pmatrix}$$

The covariance matrix $M$ of the non-trivial  to non-trivial distribution of the random vector $(X_i,Y_i)$ is given by 
$$M = 
\begin{pmatrix}
c & d \\
d & c
\end{pmatrix}$$

where $c = \frac{L_i}{4}\left(1+\frac{1}{2^{L_i}-1}\right) - \frac{L_i^2}{4}\left(1+\frac{1}{2^{L_i}-1}\right)\frac{1}{2^{L_i}-1},$

and $d = \frac{L_i}{4}\left(1+\frac{1}{2^{L_i}-1}\right) + \frac{L_i^2}{4}\left(1+\frac{1}{2^{L_i}-1}\right)\frac{1}{2^{L_i}-1} -1.$

\end{lemma}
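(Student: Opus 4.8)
The plan is to reduce each covariance matrix to elementary moment computations for a binomial variable and for the ``number of trailing zeros'' statistic, together with one symmetry. The trivial-to-non-trivial case is immediate: by Lemma~\ref{dist2} we have $X_i = Y_i$ with $X_i \sim \mathrm{Binomial}(L_i,1/2)$, so $\mathrm{Var}(X_i) = \mathrm{Var}(Y_i) = \mathrm{Cov}(X_i,Y_i) = L_i/4$, which is exactly the claimed matrix.

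For the non-trivial-to-non-trivial case I would first restate the block distribution in a form convenient for moments, already implicit in the derivation of Lemmas~\ref{dist} and~\ref{dist2}: conditioned on $\alpha_i = 1^{L_i}$, $c_{i+1}=0$, and $c_i=1$, the block $S_i$ is uniform on $\{1,\dots,2^{L_i}-1\}$ and $T_i = S_i - 1$, so $(X_i,Y_i) = (wt(S_i),\,wt(S_i-1))$; equivalently $Y_i = X_i + Z_i - 1$, where $Z_i$ is the number of trailing zeros of $S_i$ and $X_i$ is $\mathrm{Binomial}(L_i,1/2)$ conditioned on being at least $1$. Write $q = (1-2^{-L_i})^{-1} = 1 + \tfrac{1}{2^{L_i}-1}$. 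Since $X_i^2\,\mathbf{1}[X_i\geq1] = X_i^2$, conditioning the binomial on $X_i\geq1$ scales its first two moments by $q$, so $\E[X_i]=qL_i/2$ and, simplifying with $q-1 = 1/(2^{L_i}-1)$, $\mathrm{Var}(X_i)=c$. For $\mathrm{Var}(Y_i)$ I would use the involution $S_i\mapsto 2^{L_i}-S_i$: it preserves the uniform law on $\{1,\dots,2^{L_i}-1\}$, and because $2^{L_i}-S_i$ and $S_i-1$ are the $L_i$-bit bitwise complements of $S_i-1$ and $2^{L_i}-S_i$ respectively, it sends $(X_i,Y_i)$ to $(L_i-Y_i,\,L_i-X_i)$. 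Hence $(X_i,Y_i)$ has the same law as $(L_i-Y_i,L_i-X_i)$, which gives $\E[X_i]+\E[Y_i]=L_i$ and $\mathrm{Var}(Y_i)=\mathrm{Var}(X_i)=c$; so both diagonal entries equal $c$.

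For the off-diagonal entry, the identity $Y_i = X_i+Z_i-1$ gives $\E[X_iY_i] = \E[X_i^2] + \E[X_iZ_i] - \E[X_i]$, so it remains to evaluate $\E[X_iZ_i]$. Writing $Z_i = \sum_{k=1}^{L_i}\mathbf{1}[Z_i\geq k]$ and observing that conditioning on $\{Z_i\geq k\}$ fixes the last $k$ bits of $S_i$ to $0$ while leaving the first $L_i-k$ uniform, one finds (over a uniform $S_i\in\{0,\dots,2^{L_i}-1\}$, for which the $S_i=0$ outcome contributes nothing) $\E\!\left[X_iZ_i\,\mathbf{1}[S_i\geq1]\right] = \tfrac12\sum_{k=1}^{L_i}2^{-k}(L_i-k)$; the finite geometric and arithmetico-geometric sums reduce this to $L_i/2-1+2^{-L_i}$, and dividing by $\pr[S_i\geq1]=1-2^{-L_i}$ yields $\E[X_iZ_i] = \E[X_i]-1$. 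Therefore $\E[X_iY_i] = \E[X_i^2]-1$, and substituting $\E[Y_i]=L_i-\E[X_i]$ and $\E[X_i]=qL_i/2$ into $\mathrm{Cov}(X_i,Y_i) = \E[X_i^2]-1-\E[X_i]\E[Y_i]$ collapses it — via $2\E[X_i]^2 - L_i\E[X_i] = \tfrac{L_i^2}{2}q(q-1)$ and the stated formula for $c$ — to exactly $d$.

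I do not expect a genuine obstacle: the content is bookkeeping. The one spot needing care is the evaluation of $\E[X_iZ_i]$ — getting the finite sums $\sum_{k=1}^{L_i}2^{-k}$ and $\sum_{k=1}^{L_i}k\,2^{-k}$ right and correctly accounting for the $S_i=0$ term, which is why one routes through the unconditioned expectation — together with checking that the formulas degenerate correctly at $L_i=1$, where $c=d=0$ and the distribution is the point mass at $(1,0)$, consistent with the zero covariance matrix of the length-one non-trivial-to-non-trivial block.
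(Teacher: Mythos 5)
Your proof is correct, and for the non-trivial case it takes a genuinely different route from the paper. The paper computes $\E[X(L)Y(L)]$ directly from the explicit joint pmf $p_i(x,y\mid c_{i+1}=0,c_i=1)=\frac{1}{2^L-1}\binom{L-y+x-2}{x-1}$ as a double sum, collapsing the inner sum with two applications of the hockey-stick identity to $(x+1)\binom{L}{x}-\binom{L}{x-1}$, and then evaluating the resulting single sums via two more rounds of differentiated binomial generating functions. You instead exploit the structural identity $Y_i=X_i+Z_i-1$ (already implicit in the paper's derivation of Lemma~\ref{dist}), reducing $\E[X_iY_i]$ to $\E[X_iZ_i]$, which you evaluate by the indicator decomposition $Z_i=\sum_{k\geq1}\mathbf{1}[Z_i\geq k]$ and routing through the unconditioned uniform $S_i$ so the $S_i=0$ term vanishes for free. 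Your involution $S_i\mapsto 2^{L_i}-S_i$ also gives $\mathrm{Var}(Y_i)=\mathrm{Var}(X_i)$ and $\E[X_i]+\E[Y_i]=L_i$ in one stroke, whereas the paper argues the marginal symmetry of $Y$ separately and never needs the second identity because it has $\E[X]\E[Y]$ explicitly. The payoff of your approach is that the finite sums $\sum 2^{-k}$ and $\sum k\,2^{-k}$ replace the paper's chain of binomial identities, and the identity $\E[X_iZ_i]=\E[X_i]-1$ makes the cancellation to $d$ essentially immediate. I verified the intermediate values: conditioning the binomial on positivity does scale the first two raw moments by $q=(1-2^{-L_i})^{-1}$, $\E[X_iZ_i\mathbf{1}[S_i\geq1]]=\tfrac{L_i}{2}-1+2^{-L_i}$, and $\mathrm{Cov}(X_i,Y_i)=c-1+\tfrac{L_i^2}{2}q(q-1)=d$ as claimed. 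Both derivations are sound; yours is shorter and more probabilistic, the paper's is more mechanical but self-contained at the level of the explicit pmf.
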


\begin{proof}

To simplify our notation, let $(X(L),Y(L))$ denote some $(X_i,Y_i)$ with $L_i = L$.  
We begin with the trivial to non-trivial distribution.  
Since $X(L)$ is binomial on $L$ trials with success probability $\frac{1}{2}$, 
$Var(X(L)) = \frac{L}{4}$.  Since the bit string corresponding to $Y(L)$ can be viewed 
as a translation of the bit string corresponding to $X(L)$ in $\mathbb{Z}_{2^L}$, the 
distribution of $Y(L)$ is the same as the distribution of $X(L)$.  Hence, 
$Var(Y(L)) = \frac{L}{4}$.  It remains to compute $Cov(X(L),Y(L))$.  
In the case of the trivial distribution, $X(L) = Y(L)$.  So $Cov(X(L),Y(L)) = Var(X(L)) = \frac{L}{4}$.

The case of the non-trivial to non-trivial distribution requires more work.  
For our computation, we assume $\alpha_i = 1^{L_i}$.  As the nontrivial distributions are symmetric in $x$ and 
$y$, the covariances will be the same.  We begin by evaluating $Var(X(L)) = \E[X(L)^2]-\E[X(L)]^2.$  

As the block with weight $X(L)$ is chosen uniformly at random among all non-zero strings of length $L$, 

\begin{eqnarray*}
\E[X(L)] &=& \frac{L}{2}\left(1+\frac{1}{2^L-1}\right) \\
\E[X(L)^2] &=& \frac{1}{2^L-1} \displaystyle\sum_{n=1}^L {\binom{L}{n}n^2}.
\end{eqnarray*}

We use repeated differentiation of the binomial theorem to compute $\displaystyle\sum_{n=1}^L {\binom{L}{n}n^2}$.

$$\displaystyle\sum_{n=1}^L {\binom{L}{n}x^n} = (x+1)^L-1.$$

Differentiating with respect to $x$ yields:

\begin{eqnarray*}
\displaystyle\sum_{n=1}^L {\binom{L}{n}nx^{n-1}} &=& L(x+1)^{L-1} \\
\displaystyle\sum_{n=1}^L {\binom{L}{n}nx^{n}} &=& L(x+1)^{L-1}x.
\end{eqnarray*}

Differentiating a second time with respect to $x$ gives:

$$\displaystyle\sum_{n=1}^L {\binom{L}{n}n^2 x^{n-1}} = L(L-1)(x+1)^{L-2}x + L(x+1)^{L-1}.$$

Plugging in $x=1$ gives us the sum we want:

\begin{eqnarray*}
\displaystyle\sum_{n=1}^L {\binom{L}{n}n^2} &=& L(L-1)2^{L-2} + L2^{L-1} \\
&=& L2^L \left(\frac{L-1}{4} + \frac{1}{2}\right) \\
&=& \frac{L(L+1)}{4}2^L.
\end{eqnarray*}

Hence, the variance of $X(L)$ is given by:

\begin{eqnarray*}
Var(X(L)) &=& \frac{L(L+1)}{4}\frac{2^L}{2^L-1} - \frac{L^2}{4}\left(1+\frac{1}{2^L-1}\right)^2 \\
&=& \frac{L^2+L}{4}\left(1+\frac{1}{2^L-1}\right) - \frac{L^2}{4}\left(1+\frac{1}{2^L-1}\right) 
- \frac{L^2}{4}\left(1+\frac{1}{2^L-1}\right)\frac{1}{2^L-1} \\
&=& \frac{L}{4} \left(1+\frac{1}{2^L-1}\right) - \frac{L^2}{4} \left(1+\frac{1}{2^L-1}\right)\frac{1}{2^L-1}.
\end{eqnarray*}

Observe that $Y$ is the weight of a block of length $L$ chosen uniformly at random from all strings except $1^L$.  
So by symmetry, $Var(Y) = Var(X)$.  We now compute $Cov(X,Y) = \E[X(L),Y(L)] - \E[X(L)]\E[Y(L)]$.  

\begin{eqnarray*}
\mathbb{E}[X(L)Y(L)] &=& \frac{1}{2^L-1} \displaystyle\sum_{1\leq y\leq x+1\leq L}{xy\binom{L-x+y-2}{y-1}} \\
&=& \frac{1}{2^L-1} \displaystyle\sum_{x=0}^{L-1}{x\displaystyle\sum_{y=1}^{x+1}{y\binom{L-x+y-2}{y-1}}} \\
&=& \frac{1}{2^L-1} \displaystyle\sum_{x=0}^{L-1}{x\displaystyle\sum_{y=0}^{x}{(y+1)\binom{L-x+y-1}{y}}}.
\end{eqnarray*}

Let $A(x) = \displaystyle\sum_{y=0}^{x}{(y+1)\binom{L-x+y-1}{y}}$ be the inner summation.  Then by repeated application 
of the hockey stick identity, 

\begin{eqnarray*}
A(x) &=& \displaystyle\sum_{y=0}^{x}{(x+1)\binom{L-x+y-1}{y}} - \displaystyle\sum_{y=0}^{x-1}{(x-y)\binom{L-x+y-1}{y}} \\
&=& (x+1)\binom{L}{x} - \displaystyle\sum_{y=0}^{x-1}\displaystyle\sum_{j=0}^{y}{\binom{L-x+j-1}{j}} \\
&=& (x+1)\binom{L}{x} - \displaystyle\sum_{y=0}^{x-1}{\binom{L-x+y}{y}} \\
&=& (x+1)\binom{L}{x} - \binom{L}{x-1}.
\end{eqnarray*}

Substituting $A(x)$ back into our expression for $\mathbb{E}[X(L)Y(L)]$ yields

$$\mathbb{E}[X(L)Y(L)] = \frac{1}{2^L-1}\left[\displaystyle\sum_{x=1}^{L-1}{x(x+1)\binom{L}{x}} - \displaystyle\sum_{x=1}^{L-1}{x\binom{L}{x-1}} \right].$$

Let $B = \displaystyle\sum_{x=1}^{L-1}{x(x+1)\binom{L}{x}}$ and let $C = \displaystyle\sum_{x=1}^{L-1}{x\binom{L}{x-1}}$.  
We can simplify $B$ and $C$ by starting with the binomial theorem and applying standard generating function methods.

\begin{eqnarray*}
\displaystyle\sum_{k=0}^{L}{\binom{L}{k}x^k} &=& (1+x)^L \\
\displaystyle\sum_{k=0}^{L}{\binom{L}{k}x^{k+1}} &=& x(1+x)^L.
\end{eqnarray*}

Differentiating both sides with respect to $x$ gives:

\begin{equation}
\label{binomDiff1}
\displaystyle\sum_{k=0}^{L}{(k+1)\binom{L}{k}x^k} = (1+x)^{L-1}(1+(L+1)x).
\end{equation}

Substituting $x=1$ in equation~\ref{binomDiff1} gives

\begin{eqnarray*}
\displaystyle\sum_{k=0}^{L}{(k+1)\binom{L}{k}} &=& (1+x)^{L-1}(L+2) \\
C+L\binom{L}{L-1}+(L+1)\binom{L}{L} &=& 2^{L-1}(L+2) \\
C &=& 2^{L-1}(L+2) - (L^2+L+1).
\end{eqnarray*}

To get $B$, we differentiate equation~\ref{binomDiff1} with respect to $x$ once more

\begin{equation}
\label{binomDiff2}
\displaystyle\sum_{k=1}^{L}{k(k+1)\binom{L}{k}x^{k-1}} = L(1+x)^{L-2}(2+(L+1)x).
\end{equation}

Substituting $x=1$ in equation~\ref{binomDiff2} gives

\begin{eqnarray*}
\displaystyle\sum_{k=1}^{L}{k(k+1)\binom{L}{k}} &=& L 2^{L-2}(L+3) \\
B+L(L+1)\binom{L}{L} &=& 2^{L-2}L(L+3) \\
B &=& 2^{L-2}L(L+3) - (L^2+L).
\end{eqnarray*}

Using the simplified expressions for $B$ and $C$, we get

\begin{eqnarray*}
\mathbb{E}[X(L)Y(L)] &=& \frac{1}{2^L-1}(B-C)\\
&=& \frac{1}{2^L-1}(2^{L-2}L(L+3) - (L^2+L) - 2^{L-1}(L+2) + (L^2+L+1)) \\
&=& \frac{2^L}{2^L-1} \left(\frac{L(L+1)}{4} - \frac{L(L+1)}{2^L} - \frac{L+2}{2} + \frac{L^2+L+1}{2^L}\right) \\
&=& \left(1+\frac{1}{2^L-1}\right) \left(\frac{L^2+L-4}{4} + \frac{1}{2^L}\right) \\
&=& \left(1+\frac{1}{2^L-1}\right) \left(\frac{L^2+L}{4} - \left(1-\frac{1}{2^L}\right)\right) \\
&=& \frac{L^2+L}{4} \left(1+\frac{1}{2^L-1}\right) - 1 \\
\end{eqnarray*}

Hence the covariance of X(L) and Y(L) is

\begin{eqnarray*}
Cov(X(L),Y(L)) &=& \frac{L^2+L}{4} \left(1+\frac{1}{2^L-1}\right) - 1 - 
\frac{L}{2}\left(1+\frac{1}{2^L-1}\right) \frac{L}{2}\left(1-\frac{1}{2^L-1}\right) \\
&=& \frac{L^2+L}{4} \left(1+\frac{1}{2^L-1}\right) - 1 - 
\frac{L^2}{4}\left(1+\frac{1}{2^L-1}\right) + \frac{L^2}{4}\left(1+\frac{1}{2^L-1}\right) \frac{1}{2^L-1} \\
&=& \frac{L}{4} \left(1+\frac{1}{2^L-1}\right) + \frac{L^2}{4}\left(1+\frac{1}{2^L-1}\right) \frac{1}{2^L-1} - 1.
\end{eqnarray*}

\end{proof}

\subsection{Breaking up the Weight Distribution}

Recall that the joint distribution of initial and final weights is the sum of the 
joint distribution of initial and final weights for 
$m = \Theta(n)$ smaller parts: $(X,Y) = \displaystyle\sum_{i=1}^{m}{(X_i,Y_i)}$.  
However, the terms of this sum are dependent.  We can remove the dependence by 
first sampling the carry bits according to their distribution.  Given the carry bits, 
all of the terms in the sum are independent and have one of four types of distributions 
given by Lemma~\ref{dist2}.  This gives us access to the Central Limit Theorem and 
the fact that covariance matrices add, both of which will be used in the proof.

We will break up $(X,Y)$ into a sum of a Gaussian $(X_G,Y_G)$, a translation 
$(X_T,Y_T)$ and some remainder $(X_R,Y_R)$, which we show is well-behaved.  
As the non-trivial to non-trivial distribution with block length at least $2$ (Type 
$4$) is the only type with invertible covariance matrix, our goal will be to find many 
identical distributions of this type.  By the Central Limit Theorem, these sum to a $2$-D 
Gaussian $(X_G,Y_G)$ of dimensions $\Theta(\sqrt{n})$.  
This will be the main part of the sum that pushes the 
distribution into the second and fourth quadrants.

It is not always possible to find many identical distributions of type $4$.  If there are 
$o(n)$ blocks of length at least $2$, then it is trivially impossible.  We deal with this 
case separately with a slightly modified argument.  Otherwise, there are $\Theta(n)$ blocks 
of length at least $2$.  We will show that with probability at least $\frac{1}{6}$, the 
carry bits arrange themselves in such a way so that there are $\Theta(n)$ distributions of 
type $4$.  This is enough to find many identical distributions of type $4$.

We then consider the sum of the remainder of the type $4$ distributions along with the 
trivial to non-trivial type $1$ distributions, $(X_R,Y_R)$, 
and show that it is well-behaved.  As the covariance 
matrices add, we will be able to apply the $2$-D Chebyshev inequality to guarantee that 
half of the distribution lies inside an ellipse of dimensions $O(\sqrt{n})$.  
This will be enough to guarantee some constant proportion $p$ of the distribution of 
$(X_G,Y_G) + (X_R,Y_R)$ in the second quadrant, and the same proportion $p$ in the 
fourth quadrant.  The rest of the distribution of $(X,Y)$ is a translation $(X_T,Y_T)$ 
along the line $y=-x$ relative to the mean.  After translation, we still have $p$-fraction 
of the distribution in either the second or fourth quadrant.

We first consider the case where there are $m'=\Theta(n)$ blocks of length at least $2$.  
The following lemma says that with probability at least $\frac{1}{6}$, we get many 
distributions of type $4$.  

\begin{lemma}
\label{manyType4}
Suppose there are $m' = \Theta(n)$ blocks of length at least $2$.  Let $X$ be the number 
of non-trivial to non-trivial distributions with block length at least $2$.  Then:
$$\pr(X>\frac{m'}{4}) > \frac{1}{6}.$$
\end{lemma}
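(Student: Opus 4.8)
The plan is to prove the bound with a first-moment estimate together with a variance bound coming from fast mixing of the carry process, and then apply Chebyshev. First I would make the Markov structure of the carries explicit: reading the blocks from least significant ($i=m$) to most significant ($i=1$), the carry $c_i$ out of block $i$ is a function of $c_{i+1}$ and of the contents $S_i$ of block $i$ alone, and the $S_i$ are independent, so $(c_{m+1}=0,c_m,\dots,c_1)$ is a non-homogeneous Markov chain on $\{0,1\}$. From Lemma~\ref{dist2} and its $0$-block analogue one reads off: block $i$ is nontrivial iff its incoming carry $c_{i+1}$ differs from the digit of $\alpha_i$; a nontrivial block of length $L_i$ emits the ``flipped'' carry with probability $2^{-L_i}$ and otherwise emits the carry equal to its own digit; and a trivial block emits the carry equal to its digit. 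Hence block $i$ is non-trivial to non-trivial exactly when block $i$ and block $i-1$ are both nontrivial, a function of the consecutive chain states $(c_{i+1},c_i)$; and since the digits of consecutive blocks alternate, $p_i := \pr[\text{block }i\text{ nontrivial}]$ satisfies $p_i = 1-p_{i+1}\,2^{-L_{i+1}}$.

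For the first moment, this recursion gives $p_i\geq 1-2^{-1}=\tfrac12$ for every $i<m$ (as $p_{i+1}\leq 1$ and $L_{i+1}\geq 1$), while conditionally on block $i$ being nontrivial block $i-1$ is nontrivial with probability $1-2^{-L_i}$ (this uses only $S_i$, independent of everything below). So each block $i$ with $L_i\geq 2$, apart from the two boundary blocks, is of type $4$ with probability $p_i(1-2^{-L_i})\geq \tfrac12\cdot\tfrac34=\tfrac38$, giving $\E[X]\geq \tfrac38 m'-O(1)$. This already explains the constants in the statement: applying Markov's inequality to $m'-X\geq 0$ yields $\pr[X>m'/4]=\pr[m'-X<\tfrac34 m']\geq 1-\tfrac{(5/8)m'+O(1)}{(3/4)m'}=\tfrac16-O(1/m')$, so all that remains is to upgrade this to a strict inequality, which a crude concentration bound handles.

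For the variance I would show the carry chain mixes geometrically. Since the block digits alternate, any two consecutive blocks form a $01$-pair or a $10$-pair, and the same case analysis shows that over such a pair the two-step transition lands in one fixed state---the digit of the more significant block of the pair---with probability at least $\tfrac12$ regardless of the carry entering the pair. This is a Doeblin minorization with constant $\tfrac12$, so coupling two copies of the chain they coincide after $k$ block-pairs with probability at least $1-2^{-k}$; hence $|\mathrm{Cov}(\mathbf 1[\text{block }i\text{ type }4],\,\mathbf 1[\text{block }j\text{ type }4])|=O(2^{-|i-j|/2})$. Summing the variances and covariances of the indicators gives $\mathrm{Var}(X)=O(m')$.

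Finally, since $m'=\Theta(n)$ we have $\E[X]-m'/4\geq \tfrac18 m'-O(1)\geq \tfrac1{16}m'$ for $n$ large, so by Chebyshev $\pr[X\leq m'/4]\leq \pr[\,|X-\E[X]|\geq \tfrac1{16}m'\,]\leq \mathrm{Var}(X)/(m'/16)^2=O(1/m')=o(1)$, whence $\pr[X>m'/4]=1-o(1)>\tfrac16$ for all sufficiently large $n$. The step I expect to be the main obstacle is the variance bound: the carry chain is non-homogeneous and its one-step transitions are not contracting (a run of $1$-blocks propagates the carry $1$ deterministically), so one must pass to two-block windows and use the alternation of the digits to obtain the uniform Doeblin condition; the rest is bookkeeping.
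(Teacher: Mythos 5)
Your proposal is correct, but it does substantially more than the paper does, and along the way you essentially rediscover the paper's entire proof as a side remark. The paper's argument is exactly your ``preliminary'' step: it computes $\E[X]\geq\tfrac38 m'$ via $\pr[\text{type }4]=\pr[\text{nontrivial carry out}\mid\text{nontrivial carry in}]\cdot\pr[\text{nontrivial carry in}]\geq\tfrac34\cdot\tfrac12$, sets $Y=m'-X$, and applies a single Markov inequality $\pr[Y\geq\tfrac34 m']\leq\tfrac{5}{6}$ to conclude. It does not justify the bound $\pr[\text{nontrivial carry in}]\geq\tfrac12$; your recursion $p_i=1-p_{i+1}2^{-L_{i+1}}$ is the clean way to see it and is a genuine improvement in rigor. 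You also correctly flag two things the paper glosses over: the $O(1)$ boundary correction at the least-significant block, and the fact that Markov gives $\geq$ rather than the stated $>$; neither affects how the lemma is used downstream, where any fixed positive constant suffices. Where you diverge is in then building a full variance bound via a two-block Doeblin minorization and coupling, and applying Chebyshev to get $\pr[X>m'/4]=1-o(1)$. That argument is sound --- the key observation that over a $01$/$10$ pair the two-step transition lands on the more-significant digit with probability $\geq\tfrac12$ regardless of the incoming carry is correct, and it does yield $\mathrm{Var}(X)=O(m')$ and hence concentration. But this is considerably more machinery than the lemma requires, and the paper does not do it; if you want to match the paper's economy, stop after the Markov step.
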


\begin{proof}

\begin{eqnarray*}
\E[X] &=& \summ_{i=1}^{m'}{\pr(\text{Block } i \text{ is non-trivial to non-trivial})} \\
&=& \summ_{i=1}^{m'}{\pr(\text{Non-trivial carry out } \mid \text{ non-trivial carry in})\cdot \pr(\text{Non-trivial carry in}} \\
&\geq& \summ_{i=1}^{m'}{\frac{3}{4}\cdot \frac{1}{2}} \\
&=& \frac{3}{8}m'.
\end{eqnarray*}

Let $Y$ be the number of blocks of length at least $2$ that are not non-trivial to non-trivial.  
Then $\E[Y] \leq \frac{5}{8}m'$.  By Markov's inequality, 

$$\pr(Y\geq t) \leq \frac{\E[Y]}{t} \leq \frac{5}{8}\cdot \frac{m'}{t}.$$

Taking $t = \frac{3}{4}m'$ yields 

\begin{eqnarray*}
\pr(Y\geq \frac{3}{4}m') &\leq& \frac{5}{6} \\
\pr(Y\leq \frac{3}{4}m') &\geq& \frac{1}{6} \\
\pr(X\geq \frac{1}{4}m') &\geq& \frac{1}{6}.
\end{eqnarray*}

\end{proof}

We now show that many distributions of type $4$ implies many identical distributions 
of type $4$.

\begin{lemma}
\label{identicalDists}
Suppose we have $m=\Theta(n)$ bit-strings of total length at most $n$.  
Then there is some fixed positive length $L$ such that $l=\Theta(n)$ 
bit-stings have length $L$.
\end{lemma}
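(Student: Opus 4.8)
The plan is a short double-counting (pigeonhole) argument. Write $m \geq c_1 n$ for the number of bit-strings, where $c_1 > 0$ is the constant hidden in $m = \Theta(n)$; only this lower bound will matter, not the upper bound. For each positive integer $L$, let $n_L$ denote the number of the given bit-strings whose length is exactly $L$, so that $\sum_{L \geq 1} n_L = m$ and $\sum_{L \geq 1} L \cdot n_L \leq n$.

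First I would discard the long strings. Fix a constant $K$, to be chosen in a moment. Every string of length greater than $K$ contributes at least $K+1 > K$ to the total length $\sum_{L} L n_L \leq n$, so there are fewer than $n/K$ such strings. Hence at least $m - n/K \geq c_1 n - n/K$ of the bit-strings have length in $\{1, 2, \ldots, K\}$. Choosing $K = \lceil 2/c_1 \rceil$ (a constant depending only on $c_1$) makes $1/K \leq c_1/2$, so this count is at least $c_1 n - n/K \geq \tfrac{c_1}{2} n$.

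Now apply pigeonhole over the $K$ possible lengths $1, 2, \ldots, K$: at least $\tfrac{c_1}{2} n$ strings are distributed among $K$ length-classes, so some particular length $L \in \{1, \ldots, K\}$ satisfies $n_L \geq \tfrac{c_1 n}{2K}$. Since $K$ depends only on $c_1$, the factor $\tfrac{c_1}{2K}$ is a positive constant, so $l := n_L = \Theta(n)$, which is what we want; note also that $L \leq K = O(1)$, so the length we produce is itself bounded by a constant.

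There is no real obstacle here — the only points to get right are the order of the choices (pick $K$ as a function of $c_1$ \emph{before} invoking pigeonhole) and the observation that the upper bound implicit in $m = \Theta(n)$ is never used. It is worth recording where this lemma is applied: to the blocks of $\alpha$ in the case $m' = \Theta(n)$ of Theorem~\ref{versatilePowers}, the hypothesis ``total length at most $n$'' holds because the blocks partition the $n$-bit string (after the reductions already made), and ``$m = \Theta(n)$'' is exactly the count $m' = \Theta(n)$ of blocks of length at least $2$; so the $\Theta(n)$ identical blocks produced all carry type-$4$ distributions, as needed downstream.
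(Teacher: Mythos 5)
Your proof is correct and follows essentially the same route as the paper's: discard the (fewer than $n/K$) long strings by a counting bound on total length, then pigeonhole over the $O(1)$ remaining lengths. The only cosmetic difference is that the paper packages the long-string bound as Markov's inequality applied to the length of a uniformly random block, whereas you state the double-counting directly; the thresholds and constants that come out are the same up to the ceiling.
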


\begin{proof}

Let $n_k$ be the number of blocks of length $k$.  Then we have the following equations 
about the total number of blocks and the total length of all the blocks:

\begin{eqnarray*}
\displaystyle\sum_{k=1}^{n}{n_k} &=& m\\
\displaystyle\sum_{k=1}^{n}{k\cdot n_k} &\leq& n.
\end{eqnarray*}

Dividing the above equations by $m$ yields:

\begin{eqnarray*}
\displaystyle\sum_{k=1}^{n}{\frac{n_k}{m}} &=& 1\\
\displaystyle\sum_{k=1}^{n}{k\cdot \frac{n_k}{m}} &\leq& \frac{n}{m}.
\end{eqnarray*}

Now consider the random variable $A$ that returns the length of a block chosen uniformly at random.  
The left hand side of the second equation is the expected value of $A$:

$$\mathbb{E}[A] \leq \frac{n}{m}.$$

Since $m=\Theta(n)$, $\frac{n}{m}$ is bounded below by a constant.  More precisely, there is a 
constant $c_1 > 0$ such that for large enough $n$, $m > c_1n$.  So:

$$\mathbb{E}[A] \leq \frac{n}{m} < \frac{1}{c_1}.$$

By Markov's Inequality combined with the upper bound on the expected length, we have:

$$\mathbb{P}\{A>\frac{k}{c_1}\} \leq \frac{\mathbb{E}[A]}{\frac{k}{c_1}} < \frac{1}{k}.$$

This means that at most $\frac{m}{k}$ blocks have length greater than $\frac{k}{c_1}$.  
So there must be at least 
$\left(1 - \frac{1}{k}\right)m$ non-trivial blocks of length at most $\frac{k}{c_1}$.  
By the Pidgeonhole Principle, there is some length which is at most $\frac{k}{c_1}$, 
that appears $\frac{\left(1 - \frac{1}{k}\right)m}{\frac{k}{c_1}} = 
\frac{1}{k}\left(1 - \frac{1}{k}\right)c_1 m$ times.  

Essentially, some non-trivial block of short length must appear very often.  
We should pick the value of $k$ that maximizes the frequency of this length: $k=2$.  
We get that some non-trivial block of short length appears at least $\frac{c_1}{4}m$ times.  Taking 
$c = \frac{c_1}{4}$, then we get that the number 
of non-trivial distributions $l \geq cm$, and this constant $c$ is independent of the 
assignment of carry bits.

\end{proof}

So with probability at least $\frac{1}{6}$, we can find $l=\Theta(n)$ identical type $4$ distributions.  
Since these identical distributions are independent of each other, the Central Limit Theorem together with 
Lemma~\ref{covariance} tells us that the distribution of their sum is a Gaussian with covariance matrix given by 

$$M_G = 
\begin{pmatrix}
cl & dl \\
dl & cl
\end{pmatrix}$$

where $c = \frac{L}{4}\left(1+\frac{1}{2^{L}-1}\right) - \frac{L^2}{4}\left(1+\frac{1}{2^{L}-1}\right)\frac{1}{2^{L}-1},$ 

$d = \frac{L}{4}\left(1+\frac{1}{2^{L}-1}\right) + \frac{L^2}{4}\left(1+\frac{1}{2^{L}-1}\right)\frac{1}{2^{L}-1} -1,$ 
and $L\geq 2$.  

\begin{lemma}
\label{gaussDist}
Suppose $G$ is a 2-dimensional Gaussian distribution with covariance matrix given by $M_G$.  Then a fixed positive proportion 
of the distribution of $G$ lies inside (and outside) an ellipse centered at the mean with dimensions $\Theta(\sqrt{n})$.    
Furthermore, the probability density function $f_G(x,y)\geq \frac{1}{\pi n} e^{-\frac{144n}{l}}$ inside a circle of radius 
$4\sqrt{n}$ centered at the mean of $G$.  
\end{lemma}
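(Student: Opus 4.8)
The plan is to diagonalize $M_G$, observe that both of its eigenvalues are $\Theta(n)$, and then invoke two standard facts about non-degenerate $2$-dimensional Gaussians: the mass contained in an ellipsoidal level set, and the closed form of the density.

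First I would diagonalize. With $v_\pm = \tfrac{1}{\sqrt2}(1,\pm1)$, the matrix $M_G$ has eigenvalue $\lambda_+ = l(c+d)$ for the eigenvector $v_+$ and eigenvalue $\lambda_- = l(c-d)$ for $v_-$. From the formulas of Lemma~\ref{covariance} one gets $c+d = \tfrac{L}{2}\bigl(1+\tfrac{1}{2^L-1}\bigr) - 1$ and $c-d = 1 - \tfrac{L^2}{2}\cdot\tfrac{2^L}{(2^L-1)^2}$; a short monotonicity check (both are increasing in $L$ for $L\ge2$) gives $\tfrac13\le c+d<\tfrac{L}{2}$ and $\tfrac19\le c-d<1$ for every integer $L\ge2$. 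Since the $l=\Theta(n)$ identical blocks have length $L$ and fit in total length at most $n$, we have $lL\le n$, hence $L=O(1)$; together with $l=\Theta(n)$ this forces $\lambda_+,\lambda_-=\Theta(n)$, so $M_G$ is positive definite with $\det M_G=\lambda_+\lambda_- = l^2(c+d)(c-d) < \tfrac{l^2L}{2}\le\tfrac{n^2}{2L}\le\tfrac{n^2}{4}$.

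For the first assertion I would use that for a non-degenerate $2$-D Gaussian with mean $\mu$ and covariance $\Sigma$, the quadratic form $Q=(V-\mu)^{\top}\Sigma^{-1}(V-\mu)$ is $\chi^2$ with two degrees of freedom, i.e.\ exponential with mean $2$, so $\pr(Q\le r^2)=1-e^{-r^2/2}$ for every $r>0$ (a one-line polar-coordinate computation after whitening). Taking $r=1$, the ellipse $E=\{v:(v-\mu)^{\top}M_G^{-1}(v-\mu)\le1\}$ has mass $1-e^{-1/2}>0$ inside and $e^{-1/2}>0$ outside; its semi-axes are $\sqrt{\lambda_+}$ along $v_+$ and $\sqrt{\lambda_-}$ along $v_-$, both $\Theta(\sqrt n)$, so $E$ is the required ellipse.

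For the density bound, write $f_G(v)=\tfrac{1}{2\pi\sqrt{\det M_G}}\exp\!\bigl(-\tfrac12(v-\mu)^{\top}M_G^{-1}(v-\mu)\bigr)$. The estimate $\det M_G<n^2/4$ makes the prefactor at least $\tfrac{1}{2\pi(n/2)}=\tfrac{1}{\pi n}$. If $|v-\mu|\le4\sqrt n$ then $|v-\mu|^2\le16n$, and since $M_G^{-1}$ has largest eigenvalue $1/\lambda_-=\tfrac{1}{l(c-d)}\le\tfrac9l$, the exponent is at most $\tfrac12\cdot\tfrac9l\cdot16n=\tfrac{72n}{l}\le\tfrac{144n}{l}$; hence $f_G(v)\ge\tfrac{1}{\pi n}e^{-144n/l}$ on that disc. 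The only delicate point is the bookkeeping of constants: the bound $\det M_G\le n^2/4$ genuinely uses the length budget $lL\le n$ rather than a bound on $l$ alone, and the positive-definiteness of $M_G$ (hence the very existence of $M_G^{-1}$ and of a density) uses $L\ge2$, which is exactly why the Type~$4$ blocks were singled out.
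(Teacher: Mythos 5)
Your proposal is correct and takes essentially the same route as the paper: both diagonalize $M_G$ (the paper does this by explicitly rotating the ellipse by $\pi/4$, you do it by reading off the eigenvectors $(1,\pm1)/\sqrt2$ directly), both show the semi-axes $\sqrt{l(c\pm d)}$ are $\Theta(\sqrt n)$ using $c+d\ge\tfrac13$ and $c-d\ge\tfrac19$, both bound $\det M_G$ via the length budget $lL\le n$ together with $L\ge2$, and both lower-bound the density on the disc of radius $4\sqrt n$. Your eigenvalue computation is a bit tidier and in fact yields the sharper exponent $e^{-72n/l}$ (the paper loses a factor of two by dropping the $\tfrac12$ in the Gaussian exponent while also understating the smallest semi-axis by a $\sqrt2$, two slips that cancel), but the underlying argument is the same.
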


\begin{proof}
Observe first that $c-d = 1-\frac{L^2}{2}\left(1+\frac{1}{2^{L}-1}\right)\frac{1}{2^{L}-1} \geq \frac{1}{9}$ 
and $d \geq \frac{1}{9}$ for any value of $L\geq 2$.  As $\det(M_G) = (c^2-d^2)l^2 \neq 0$, $M_G$ is invertible.  
So letting $\overline{G} = (\overline{X_G},\overline{Y_G})$ denote the distribution obtained by translating $G$ 
to its mean, we get that a fixed proportion of the distribution lies in the ellipse defined by:

$$\begin{pmatrix}
\overline{X_G} & \overline{Y_G}
\end{pmatrix} M_G^{-1} 
\begin{pmatrix}
\overline{X_G}\\
\overline{Y_G}
\end{pmatrix} = 2$$

The inverse of $M_G$ is given by: 

$$M_G^{-1} = \frac{1}{(c^2-d^2)l^2} \begin{pmatrix}
cl & -dl \\
-dl & cl
\end{pmatrix}$$

Substituting $M_G^{-1}$ back into the equation of the ellipse gives:

\begin{eqnarray*}
\frac{1}{(c^2-d^2)l}[c\overline{X_G}^2 - 2d\overline{X_G}\overline{Y_G} + c\overline{Y_G}^2] &=& 2 \\
\overline{X_G}^2 - \frac{2d}{c}\overline{X_G}\overline{Y_G} + \overline{Y_G}^2 &=& \frac{2(c^2-d^2)}{c}l.
\end{eqnarray*}

We have an equation of the form $x^2-2axy+y^2 = b$, where $a = \frac{d}{c} < 1$.  
This describes an ellipse rotated by $\frac{\pi}{4}$ counterclockwise.  
By rotating the ellipse clockwise by $\frac{\pi}{4}$, we can find the dimensions of the ellipse.  Making the substitution: 

$$\begin{pmatrix}x \\ y\end{pmatrix} = 
\begin{pmatrix} \frac{\sqrt{2}}{2}(x'+y')\\ \frac{\sqrt{2}}{2}(y'-x')\end{pmatrix},$$

we get that the equation of the rotated ellipse is

\begin{equation}
\label{ellipse45}
\frac{x^2}{\frac{b}{1+a}} + \frac{y^2}{\frac{b}{1-a}} = 1.
\end{equation}

Taking $a = \frac{d}{c}$ and $b = \frac{2(c^2-d^2)}{c}l$ as in the ellipse for our Gaussian, we find that the 
squares of the dimensions of the ellipse are given by:

\begin{eqnarray*}
\frac{b}{1-a} &=& \frac{2(c^2-d^2)}{c}l \cdot \frac{c}{c-d} = 2(c+d)l \geq \frac{2}{3}l \\
\frac{b}{1+a} &=& \frac{2(c^2-d^2)}{c}l \cdot \frac{c}{c+d} = 2(c-d)l \geq \frac{2}{9}l.
\end{eqnarray*}

Hence, both dimensions of the ellipse are $\Theta(\sqrt{n})$.  In fact, both dimensions exceed $\frac{1}{3}\sqrt{l}$.  
So the circle of radius $\frac{1}{3}\sqrt{l}$ centered at the mean lies completely inside the ellipse.  Scaling every 
dimension up by a factor of $\sqrt{\frac{144n}{l}}$ tells us that inside the circle of radius $4\sqrt{n}$ 
centered at the mean, we have:

$$\begin{pmatrix}
\overline{X_G} & \overline{Y_G}
\end{pmatrix} M_G^{-1} 
\begin{pmatrix}
\overline{X_G}\\
\overline{Y_G}
\end{pmatrix} \leq \frac{144n}{l}.$$

Therefore, we have the following lower bound on the probability distribution function inside the circle 
of radius $4\sqrt{n}$:

\begin{eqnarray*}
f_G(x,y) &\geq& \frac{1}{2\pi\sqrt{\det{M_G}}} e^{-\frac{144n}{l}} \\
&=& \frac{1}{2\pi\sqrt{(c^2-d^2)l^2}} e^{-\frac{144n}{l}} \\
&\geq & \frac{1}{2\pi cl} e^{-\frac{144n}{l}} \\
&\geq & \frac{1}{\pi n} e^{-\frac{144n}{l}}.
\end{eqnarray*}

\end{proof}

The above sequence of lemmas can be used to show that if we start with many blocks of length at least $2$, 
then with positive constant probability, we can find many identical distributions that sum to a Gaussian 
of dimensions $\Theta(\sqrt{n})$.  Suppose now that the exponent $\alpha$ has a total of $m$ blocks, 
but fewer than $0.01m$ blocks of length at least $2$.  
Then at least $0.99$ fraction of the blocks have length $1$.  Consider all consecutive 
block pairs.  At most $0.01$ fraction of these pairs have their first block with length $2$, and at most 
$0.01$ fraction have their second block with length at least $2$.  So at most $0.02$ fraction have a 
block of length at least $2$.  Hence, $0.98$ fraction of the pairs consists of two blocks of length $1$.  
By the Pidgeonhole Principle, at least $0.49$ fraction of the pairs are either all $01$ or all $10$.  
Without loss of generality, assume that $0.49$ fraction of consecutive block pairs are $01$.  
We now treat each block pair $01$ as a single block of length $2$.  The initial and final weight 
distribution of this larger block, given there is no carry in and no carry out matches the type 
$4$ distribution.  We have proven the existence of a large number of modified blocks of length $2$:

\begin{lemma}
\label{manyBlocks}
Suppose there are fewer than $0.01$ fraction of the blocks have length at least $2$.  
Then at least $0.49$ fraction of consecutive block pairs are $01$ or at least 
$0.49$ fraction of consecutive block pairs are $10$.
\end{lemma}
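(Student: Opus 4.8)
The plan is a direct double-counting argument on consecutive block pairs, finished by a pigeonhole step. The structural fact I would invoke first is that, because the $\sigma_i$ are \emph{maximal} uniform blocks, consecutive blocks alternate in value: $\sigma_i = 0^{L_i}$ forces $\sigma_{i+1} = 1^{L_{i+1}}$, and vice versa. In particular, whenever a consecutive pair $(\sigma_i,\sigma_{i+1})$ consists of two blocks of length exactly $1$, that pair is either the string $01$ or the string $10$ --- there is no third option. So it suffices to show that an overwhelming fraction of the $m-1$ consecutive pairs consist of two length-$1$ blocks, and then split those between the two types.

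For the counting step, call a block \emph{long} if its length is at least $2$; by hypothesis there are fewer than $0.01 m$ long blocks. A single long block $\sigma_j$ can spoil at most two consecutive pairs: the pair $(\sigma_{j-1},\sigma_j)$, in which it is the right member, and the pair $(\sigma_j,\sigma_{j+1})$, in which it is the left member (and fewer if $j$ is the first or last index). Hence at most $2\cdot 0.01 m = 0.02 m$ of the $m-1$ consecutive pairs contain a long block, so more than $(m-1) - 0.02 m$ of them consist of two length-$1$ blocks. Since $m = \Theta(n)$, for $n$ large this is at least a $0.98 - o(1)$ fraction of all $m-1$ pairs; one may freely replace $0.49$ by any constant strictly below $0.49$, or shrink the hypothesis constant a hair, with no effect downstream, since the later argument uses only that it is a positive constant fraction.

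To finish, let $a$ be the number of $01$-pairs and $b$ the number of $10$-pairs. By the previous paragraph $a + b > (m-1) - 0.02 m$, so $\max(a,b) > \tfrac12\bigl((m-1) - 0.02 m\bigr)$, which is at least a $0.49$ fraction of the $m-1$ pairs (up to the same lower-order slack). Thus either at least a $0.49$ fraction of consecutive pairs are $01$ or at least a $0.49$ fraction are $10$, which is exactly the claim; the surrounding argument then takes, without loss of generality, the $01$ case and treats each such pair as a single modified length-$2$ block matching the type $4$ distribution.

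I expect no genuine obstacle here: this is a purely combinatorial warm-up feeding the substantive Central-Limit/Chebyshev analysis. The only points needing a little care are the bookkeeping that each long block spoils at most two pairs, and the harmless mismatch between the $m$ of the hypothesis (``$0.01$ fraction of the blocks'') and the $m-1$ pairs of the conclusion --- both absorbed by taking $m = \Theta(n)$ large and by the fact that the stated constants are not tight.
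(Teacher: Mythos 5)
Your proof is correct and takes essentially the same route as the paper: count the at most $2\cdot 0.01m$ consecutive pairs that contain a long block, conclude that at least roughly a $0.98$ fraction of pairs consist of two length-$1$ blocks, and finish by pigeonhole between the two types. You are slightly more explicit than the paper on two points the paper leaves implicit --- that maximality forces consecutive blocks to alternate (so a pair of length-$1$ blocks must be $01$ or $10$, with no $00$ or $11$ possibility), and the harmless slack between the $m$ blocks of the hypothesis and the $m-1$ pairs of the conclusion --- but the substance is identical.
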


Lemma~\ref{manyBlocks} essentially reduces the case of having few blocks of length at least $2$ to 
the case where there are many blocks of length at least $2$ by consolidating many of the length $1$ blocks.  
As there are $\Theta(n)$ such consolidated blocks, and each has type $4$ distribution with 
probability at least $\frac{3}{8}$, we can again find $\Theta(n)$ identical type $4$ distributions 
by Lemma~\ref{manyType4}.  Lemma~\ref{gaussDist} then says that these identical distributions sum 
to a Gaussian of large dimensions.  So for any $\alpha$, we can find many terms in the initial and 
final weight distribution summing to a large Gaussian.

\subsection{Distribution of the Sum of the Remaining Terms}

Consider the terms remaining in the distribution of $(X,Y) = \displaystyle\sum_{i=1}^{m}{(X_i,Y_i)}$ when the terms 
contributing to the Gaussian are removed.  The terms with distribution types $2$ or $3$ are translations in the 
$<1,-1>$ direction relative to $\left(\frac{L}{2},\frac{L}{2}\right)$.  These will contribute to the translation 
part of the distribution $(X_T,Y_T)$.  The rest of the terms of type $4$ along with the terms of type $1$ sum 
to the remainder $R=(X_R,Y_R)$.  There are $O(n)$ terms remaining.  By Lemma~\ref{covariance}, the covariance 
matrix of each of these terms is one of the following two forms:

$$\begin{pmatrix}
\frac{L}{4} & \frac{L}{4} \\
\frac{L}{4} & \frac{L}{4}
\end{pmatrix}$$

$$\begin{pmatrix}
c & d \\
d & c
\end{pmatrix}$$

where $c = \frac{L}{4}\left(1+\frac{1}{2^{L}-1}\right) - \frac{L^2}{4}\left(1+\frac{1}{2^{L}-1}\right)\frac{1}{2^{L}-1},$ 
and $d = \frac{L}{4}\left(1+\frac{1}{2^{L}-1}\right) + \frac{L^2}{4}\left(1+\frac{1}{2^{L}-1}\right)\frac{1}{2^{L}-1} -1.$

As the terms are independent given a fixing of the carry bits, the covariance matrices add.  The total covariance matrix 
of the sum is:

$$M_R = 
\begin{pmatrix}
C & D \\
D & C
\end{pmatrix}$$

where $D \leq C \leq \frac{n}{3}$, with $D=C$ only when the remainder is a sum of type $1$ distributions.

\begin{lemma}
\label{remainder}
At least half of the distribution of the remainder lies in a circle of radius $\sqrt{2n}$ 
centered at the mean.
\end{lemma}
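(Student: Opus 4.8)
The plan is a routine second-moment concentration argument: I would bound the expected squared distance of the remainder from its mean by the trace of its covariance matrix $M_R$, and then apply a Markov/Chebyshev-type inequality.

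First, write $R = (X_R,Y_R)$ and $\mu_R = \E[R]$. Conditioned on the fixing of the carry bits, the $O(n)$ blocks making up $R$ (the leftover type $4$ blocks together with all the type $1$ blocks) are independent, so as noted above their covariance matrices add up to $M_R$, whose diagonal entry satisfies $C \le \frac{n}{3}$. Since only the diagonal of $M_R$ enters the second moment (the off-diagonal $D$, and in particular its sign, is irrelevant), I get
\[
\E\!\left[\,\|R-\mu_R\|^2\,\right] \;=\; \mathrm{Var}(X_R) + \mathrm{Var}(Y_R) \;=\; \mathrm{tr}(M_R) \;=\; 2C \;\le\; \frac{2n}{3}.
\]

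Next I would apply Markov's inequality to the non-negative random variable $\|R-\mu_R\|^2$ with threshold $2n$, giving
\[
\pr\!\left[\,\|R-\mu_R\|^2 \ge 2n\,\right] \;\le\; \frac{\E\!\left[\|R-\mu_R\|^2\right]}{2n} \;\le\; \frac{2n/3}{2n} \;=\; \frac{1}{3},
\]
and therefore $\pr\!\left[\,\|R-\mu_R\| < \sqrt{2n}\,\right] \ge \frac{2}{3} > \frac12$. Thus more than half of the mass of $R$ (in fact two thirds of it) lies in the disk of radius $\sqrt{2n}$ about its mean, which is exactly the claim. This is the $2$-dimensional Chebyshev inequality referenced in the outline, and the bound $C \le \frac n3$ is precisely what makes the radius come out to $\sqrt{2n}$ with room to spare.

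\textbf{Expected obstacle.} I expect no real obstacle here: the estimate is immediate once $M_R$ and the inequality $C \le \frac n3$ are in hand. The only step deserving care is the bookkeeping behind $\mathrm{tr}(M_R) = 2C \le \frac{2n}{3}$ — namely, that conditioning on the carry bits makes the remainder blocks independent so that covariances add, and that $\sum_i L_i \le n$ together with the per-block variance bounds of Lemma~\ref{covariance} forces $C \le \frac n3$. Both facts are already established, so the argument should be only a few lines.
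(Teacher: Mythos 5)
Your argument is correct, and it is genuinely cleaner than the paper's. The paper proves the lemma by a case split: if the remainder consists only of type-$1$ blocks then $M_R$ is singular and it reduces to a one-dimensional Chebyshev bound on a binomial; otherwise it works with the Mahalanobis form $\overline{R}M_R^{-1}\overline{R}^T$, applies a ``$2$-D Chebyshev'' inequality to get a confidence ellipse, computes the semi-axes $\sqrt{2(C\pm D)}$ of that ellipse, and then observes that the ellipse sits inside the radius-$\sqrt{2n}$ disk. You instead apply Markov directly to the scalar $\|R-\mu_R\|^2$, whose mean is $\operatorname{tr}(M_R)=2C\le 2n/3$ by additivity of covariance over the (conditionally independent) blocks together with $\sum_i L_i\le n$ and the per-block diagonal bounds from Lemma~\ref{covariance}. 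This buys you three things: no case split (you never need $M_R$ to be invertible, so the all-type-$1$ degenerate case is handled automatically); no dependence on the off-diagonal entry $D$ (you correctly observe that only the trace matters for the Euclidean second moment, so the geometry of the ellipse is irrelevant); and a slightly stronger conclusion ($\ge 2/3$ of the mass, rather than $\ge 1/2$). As a side benefit, your route also sidesteps a dubious step in the paper: the paper writes the multivariate Chebyshev bound as $\pr\{\overline{R}M_R^{-1}\overline{R}^T>t\}\le 2/t^2$ and takes $t=2$, but the standard Markov argument gives $2/t$ (mean of the Mahalanobis square is the dimension), so the stated inequality and the choice $t=2$ do not cohere; your trace-plus-Markov computation is airtight and gives the same disk.
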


\begin{proof}
When the remainder is a sum of type $1$ distributions, then the remainder has the form $(X_R,X_R)$, 
where $X_R$ is a biniomial distribution with $L_R\leq n$ trials and success probability $\frac{1}{2}$.  
So by Chebyshev's Inequality, 

\begin{eqnarray*}
\pr(|X_R - \frac{L_R}{2}| > \sqrt{\frac{L_R}{2}}) \leq \frac{1}{2} \\
\pr(|X_R - \frac{L_R}{2}| > \sqrt{\frac{n}{2}}) \leq \frac{1}{2} \\
\pr(|X_R - \frac{L_R}{2}| \leq \sqrt{\frac{n}{2}}) \geq \frac{1}{2}.
\end{eqnarray*}

So in this case, at least half of the distribution of the remainder lies in a circle of radius $\sqrt{\frac{n}{2}}$.
When the remainder contains some type $4$ distributions, then $D < C \leq \frac{n}{3}$.  Hence, the 
covariance matrix $M_R$ is invertible.  So we may apply the $2$-dimensional Chebyshev inequality to 
$(X_R,Y_R)$ to get:

$$\pr\{\overline{R}M_R^{-1}\overline{R}^T > t\} \leq  \frac{2}{t^2}.$$

Taking $t=2$ yields

\begin{eqnarray*}
Pr\{\overline{X_R}^2 + 2\frac{D}{C}\overline{X_R}\overline{Y_R} + \overline{Y_R}^2 > \frac{2(C^2-D^2}{C}\} &\leq & \frac{1}{2} \\
Pr\{\overline{X_R}^2 + 2\frac{D}{C}\overline{X_R}\overline{Y_R} + \overline{Y_R}^2 \leq \\frac{2(C^2-D^2}{C}\} &>& \frac{1}{2}.
\end{eqnarray*}

Chebyshev tells us that at least half of the distribution lies in the ellipse centered at the origin defined by the inequality above.  
By a similar computation as with the Gaussian distribution, the squares of the dimensions of this ellipse are $2(C+D)$ and $2(C-D)$, 
both of which are less than $\frac{4n}{3}$.  Hence, the ellipse lies inside a circle of radius $2\sqrt{\frac{n}{3}} < \sqrt{2n}$, and 
therefore over half of the distribution of the remainder must lie inside this circle.
\end{proof}

\subsection{The Proof}
We are ready to prove the main theorem.

\begin{proof}{Theorem~\ref{versatilePowers}}
Suppose that $\alpha$ has $m \geq c n$ blocks in its binary representation.  
Then either there are $0.01m$ blocks of length at least $2$ or there are fewer than $0.01m$ blocks of length at least $2$.  
In the second case, Lemma~\ref{manyBlocks} tells us that we can find $0.49m$ 
identical pairs of consecutive blocks of length $1$.  Lemma~\ref{manyType4} then says that with probability at least 
$\frac{1}{6}$, the carry bits arrange themselves in such a way that there are at least 
$\frac{0.49}{4}m \geq \frac{1}{9}m$ identical type $4$ distributions.  

If there are $m' > \frac{1}{100}m$ blocks of length at least $2$, then Lemma~\ref{manyType4} says that with 
probability at least $\frac{1}{6}$, the carry bits arrange themselves in such a way that there are at least 
$\frac{1}{4}m' > \frac{1}{400}m$ type $4$ distributions.  Since $\frac{1}{9}>\frac{1}{400}$, 
we conclude that for any $\alpha$ with $m$ blocks, 
we can find $\frac{1}{400}m$ type $4$ distributions with probability $\frac{1}{6}$.

As $m \geq c n$, the number of identical type $4$ distributions exceeds $\frac{1}{400}m \geq \frac{c}{400} n$.  
By Lemma~\ref{identicalDists}, we can find $l \geq \frac{c}{1600}\cdot\frac{1}{400}m \geq \frac{c^2}{640000}n$ 
identical type $4$ distributions each with block length $L$.  By Lemma~\ref{gaussDist}, 
these sum to a Gaussian whose probability distribution function $f_G(x,y)\geq \frac{1}{\pi n} e^{-\frac{144n}{l}}$ 
inside a circle of radius $4\sqrt{n}$ centered at the mean of $G$.  As each type $4$ distribution has mean 
$\left(\frac{L}{2},\frac{L}{2}\right) \pm \frac{L}{2}\cdot\frac{1}{2^L-1}(1,-1)$, we decompose the Gaussian into 
a Gaussian $G=(X_G,Y_G)$ centered at $\left(\frac{Ll}{2},\frac{Ll}{2}\right)$ and a translation in the $(1,-1)$ 
direction which contributes to the translation term $(X_T,Y_T)$. 

It is worth noting that every distribution type of block length $L$ can be decomposed into the sum of a 
distribution centered at $\left(\frac{L}{2},\frac{L}{2}\right)$ and a translation in the $(1,-1)$ direction.  
To see this, we will write the mean of each type of distribution as $\left(\frac{L}{2},\frac{L}{2}\right) + k(1,-1)$, 
for some $k$ depending on $L$.  

Type $1$ distributions have mean $\left(\frac{L}{2},\frac{L}{2}\right)$.  Type $2$ distributions have mean 
$\left(\frac{L}{2},\frac{L}{2}\right) \pm \frac{L}{2}(1,-1)$.  Type $3$ distributions have mean 
$\left(\frac{L}{2},\frac{L}{2}\right) \pm \frac{L}{2}(1,-1)$.  Type $4$ distributions have mean 
$\left(\frac{L}{2},\frac{L}{2}\right) \pm \frac{L}{2}\cdot\frac{1}{2^L-1}(1,-1)$.

We extract the translation component from each term and call that sum $(X_T,Y_T)$.  
Let $R = (X_R,Y_R) = (X,Y) - (X_G,Y_G) - (X_T,Y_T)$ be the remainder.  If $L_T$ 
denotes the total length of all blocks contributing to $R$, we have that $\left(\frac{L_T}{2},\frac{L_T}{2}\right)$ 
is the mean of $R$.  Let $\overline{R} = R-\left(\frac{L_T}{2},\frac{L_T}{2}\right)$.  
By Lemma~\ref{remainder}, at least half of the distribution of $\overline{R}$ lies in an circle centered at the origin 
with radius $\sqrt{2n}$.  By taking the square $W$ of side length $2\sqrt{2n}$ surrounding the circle, we see that at least half 
of the distribution of $R$ lies in $W$.  

For any point $(p,q)$ in the square, consider the distribution of the Gaussian $\overline{G}+(p,q) = (\overline{X_G}+p,\overline{Y_G}+q)$, 
which is centered at $(p,q)$.  Lemma~\ref{gaussDist} guarantees that the probability distribution function exceeds 
$\frac{1}{\pi n}e^{-\frac{144n}{l}}$ inside a circle of radius $4\sqrt{n}$ centered at $(p,q)$.  
Contained within this circle is a square with side length $\sqrt{2n}$ in the second quadrant.  
Hence, the probability that $\overline{G}+(p,q)$ lies in the second quadrant is at least 
$\frac{2}{\pi}e^{-\frac{144n}{l}}$.

Recall that $l \geq \frac{c^2}{640000}n$, where $m \geq c n$.  So we have: 

\begin{eqnarray*}
\frac{2}{\pi}e^{-\frac{144n}{l}} &\geq & \frac{2}{\pi}e^{-\frac{144\cdot 640000}{c^2}}\\
&= & \frac{2}{\pi}e^{-\frac{92160000}{c^2}}.
\end{eqnarray*}

Take $C = \frac{2}{\pi}e^{-\frac{92160000}{c^2}}$.  Then with probability $\frac{1}{6}$, at least 
$C$ fraction of the distribution of $\overline{G} + \overline{R}$ conditioned on the carry bits lies 
in the second quadrant, where $C$ is a constant depending only on $c$.  
By symmetry, the same fraction lies in the fourth quadrant.  Finally, we must add 
the translation $(X_T,Y_T)$ in the $(1,-1)$ direction.  No matter the size of the translation, we 
are guaranteed $C$ fraction in either the second or fourth quadrant.  Hence, we have at least 
$\frac{C}{6}$ of the unconditioned distribution of $(X,Y) = (X_G,Y_G) + (X_R,Y_R) + (X_T,Y_T)$ lying in the 
second or fourth quadrants relative to the mean $\left(\frac{n}{2},\frac{n}{2}\right)$, and so 
at least $\frac{C}{12}$ lying in either the second or fourth quadrant.  By symmetry, we get 
at least $\frac{C}{12}$ lying in both the second and fourth quadrants.
\end{proof}

\section{Heavily Shifting Numbers}
We have shown that $\alpha$ with many uniform blocks of $0$'s and $1$'s have the shifting 
property.  An interesting related question is whether there is an $\alpha$ that is heavily 
shifting: that is, $\alpha$ shifts almost all of the light strings to heavy strings.  More precisely, 
$o(1)$ fraction of light strings remain light under translation by $\alpha$.  We already 
know that when $\alpha$ has $o(\sqrt{n})$ blocks, $\alpha$ does not have 
the $\epsilon$-shifting property, and can therefore not be heavily shifting.  

Our current understanding of the joint initial and final weight distribution cannot quite show that $\alpha$ 
with $\Theta(n)$ blocks are also not heavily shifting.  
The reason is that we have no handle on the size of the translation term $(X_T,Y_T)$ in the $(1,-1)$ direction.  
It is possible that the translation is so large most of the time to make $\alpha$ heavily shifting, though we 
suspect this does not happen.  
  
It is an open problem to figure out which $\alpha$ are heavily shifting.

\section{Acknowledgements}
This material is based upon work supported by the National Science Foundation Graduate Research Fellowship 
under Grant No. DGE-1433187.  
The author would also like to thank Swastik Kopparty for many useful discussions.

\end{document}